\documentclass[12pt]{article}
\usepackage{amsfonts}
\usepackage{amssymb}
\usepackage{mathrsfs}
\usepackage{srcltx}
\textwidth 155mm \textheight 225mm \oddsidemargin 15pt
\evensidemargin 0pt \topmargin 0cm \headsep 0.3cm

\usepackage{amsmath}
\usepackage{amsthm}
\usepackage{amstext}
\usepackage{amsopn}
\usepackage{graphicx}

\newtheorem{theorem}{Theorem}[section]
\newtheorem{lemma}[theorem]{Lemma}

\theoremstyle{definition}

\theoremstyle{remark}
\newtheorem{remark}[theorem]{Remark}

\numberwithin{equation}{section}

\newcommand{\ba}{\begin{array}}
\newcommand{\ea}{\end{array}}
\newcommand{\f}{\frac}

\newcommand{\la}{\lambda}

\newcommand{\ds}{\displaystyle}

\begin{document}
\date{}
\title{ \bf\large{Stationary patterns of a diffusive predator-prey model with Crowley-Martin functional response}\footnote{This research is supported by the National Natural Science Foundation of China (Nos. 11371111 and 11301111)}}
 \author{Shanshan Chen\footnote{Email: chenss@hit.edu.cn},\ \  Junjie Wei\footnote{Corresponding Author, Email: weijj@hit.edu.cn}, and Jinzhu Yu
 \\
{\small  Department of Mathematics,  Harbin Institute of Technology,\hfill{\ }}\\
 {\small Weihai, Shandong, 264209, P.R.China.\hfill{\ }}}
\maketitle

\begin{abstract}
{A diffusive predator-prey system with predator interference and Neumann boundary conditions
is considered in this paper. We derive
some results on the existence and nonexistence of nonconstant stationary solutions.
It is shown that there exist no nonconstant stationary solutions when the effect of the predator interference is strong or the conversion rate of the predator is large, and nonconstant stationary solutions emerge when the diffusion rate of the predator is large.}

\noindent {\bf{Keywords}}: Reaction-diffusion; Nonexistence; Steady state; Global stability
\end{abstract}

\section{Introduction}

The interaction between the predator and prey is closely related with
the functional response of the predator, which refers to
the per capita feeding rate of the predator upon its prey \cite{Berec,Murray}.
In general, a diffusive predator-prey model takes the form \cite{Murdoch}
\begin{equation}\label{1.1}
\begin{cases}
  \ds\frac{\partial u}{\partial t}=d_1\Delta u+ru\left(1-\f{u}{k}\right)-bp(u,v)v, & x\in \Omega,\; t>0,\\
 \ds\frac{\partial v}{\partial t}=d_2\Delta v-d v+cp(u,v)v, & x\in\Omega,\; t>0,\\
\end{cases}
\end{equation}
where $u(x,t)$ and $v(x,t)$ are the densities
of the prey and predator at time $t$ and location $x$ respectively, $d_1$, $d_2$, $r$, $d$, $k$, $b$ and $c$ are positive constants, and $p(u,v)$ represents the functional response of the predator.
If $p(u,v)$ depends only on $u$, then it is referred to as the predator density-independent functional response.
The predator density-independent functional responses are generally
classified into four Holling types: type I-IV\cite{Holling}. When $p(u,v)$ is Holling type I functional response, that is,
\begin{equation}\label{H1}
p(u,v)=\begin{cases}
  u, & u< 1/\alpha,\\
 1/\alpha, & u\ge 1/\alpha,
\end{cases}
\end{equation}
Seo and Kot \cite{Seo}
found that the kinetic system of model \eqref{1.1} possesses two limit cycles and these cycles arise through global cyclic-fold bifurcations. When $p(u,v)$ is the following Holling type II functional response
\begin{equation}\label{H2}
p(u,v)=\ds\f{u}{1+\alpha u},
\end{equation}
the ODE system of model \eqref{1.1} has been investigated extensively on the aspect of the global stability and existence and uniqueness of a limit cycle \cite{Cheng,Hsu1,Hsu2,Hsu3}. We refer to \cite{Peng-Shi,Wang,Yi} on the bifurcations of steady states and periodic solutions and the existence and nonexistence of nonconstant steady states for PDE system with homogeneous Neumann boundary conditions. For PDE system subject to homogeneous Dirichlet boundary conditions, Zhou and Mu \cite{muzhou} gave the necessary and sufficient condition for the existence of positive steady states of system \eqref{1.1}. Moreover,
other predator-prey models with Holling type II functional response were studied in \cite{Chen-yu2,Du-Lou3,Du-Lou,Du-Shi1,Du-Shi}.
When $p(u,v)$ is Holling type III or IV functional response, the dynamics and spatiotemporal patterns of system \eqref{1.1} were investigated in \cite{pangw,RuanX1,WangJF,RuanX2,ZhuC} and references therein.

The above mentioned Holling type functional responses can induce different dynamical behaviors and spatiotemporal patterns, which can be used to explain the ecological complexity. However, these functional responses are all independent of the predator density, which implies that the competition among
predators for food occurs only in the process of prey depletion \cite{Berec}. This is not realistic sometimes and the predator interference was investigated by many researchers. For example, when $p(u,v)$ is Holling type I functional response with predator interference, that is,
\begin{equation}\label{H1i}
p(u,v)=\begin{cases}
  \ds\f{u}{1+\beta v}, & u< 1/\alpha,\\
 \ds\f{1}{\alpha(1+\beta v)}, & u\ge 1/\alpha,
\end{cases}
\end{equation}
Seo and DeAngelis \cite{Seo2} studied the stability and bifurcations of equilibria for the kinetic system of model \eqref{1.1}. Here $\beta v$ models the mutual interference among predators, and if $\beta=0$, Eq. \eqref{H1i} is reduced to Holling type I functional response. Similarly, the following functional response can be derived from Holling type II functional response:
\begin{equation}\label{H2i}
p(u,v)=\ds\f{u}{1+\alpha u+\beta v}.
\end{equation}
This functional response is always referred to as the Beddington-DeAngelis (BD) functional response, which was introduced by
Beddington \cite{Beddington} and DeAngelis et al. \cite{DeAngelis}. The dynamics of model \eqref{1.1} with BD functional response was investigated in \cite{Cantrell,Seo2,ZhangJ}.
Similarly, the following functional response, proposed by Bazykin \cite{Bazykin} and Crowley and Martin \cite{Crowley},
\begin{equation}\label{H2c}
p(u,v)=\ds\f{u}{(1+\alpha u)(1+\beta v)}
\end{equation}
also models the predator interference, which is referred to as the Crowley-Martin (CM) functional response. For this functional response, Sambath et al. \cite{Sambath}
studied the stability and bifurcations of the positive equilibrium of system \eqref{1.1} when the positive equilibrium is unique. Wang and Wu \cite{WangM} studied a slightly different model, where the growth rate of the predator is logistic type in the absence of prey, and obtained the stability and multiplicity of the positive
solutions when some parameters are large or small.
We remark that there are also many results on other predator-prey models with CM functional response \cite{LiW,ShiR,WeiW,ZhouJ2,ZhouJ1}.

In this paper, we revisit model \eqref{1.1} with CM functional response and no
flux boundary conditions, that is,
\begin{equation}\label{CM1}
\begin{cases}
  \ds\frac{\partial u}{\partial t}-d_1\Delta u=ru\left(1-\ds\f{u}{k}\right)-\ds\frac{buv}{(1+\alpha u)(1+\beta v)}, & x\in \Omega,\; t>0,\\
 \ds\frac{\partial v}{\partial t}-d_2\Delta v=-dv+\ds\frac{cuv}{(1+\alpha u)(1+\beta v)}, & x\in\Omega,\; t>0,\\
 \partial_\nu  u=\partial_\nu
  v=0,& x\in \partial \Omega,\;
 t>0,\\
 u(x,0)=u_0(x)\ge(\not\equiv)0, \;\; v(x,0)=v_0(x)\ge(\not\equiv)0,& x\in\Omega,
\end{cases}
\end{equation}
where $\Omega$ is a bounded domain in $\mathbb{R}^N$ ($N\le 3$) with a
smooth boundary $\partial \Omega$; $u(x,t)$ and $v(x,t)$ stand for the densities
of the prey and predator at time $t$ and location
$x$ respectively; $r>0$ is the intrinsic growth rate of the prey; $k>0$ is the carrying capacity of the prey;
$d>0$ is the mortality rate of the predator; $b,c>0$ measure the
interaction strength between the predator and prey; $\alpha>0$ measures the prey's ability to evade attack, and $\beta>0$ measures the mutual interference between predators.
By using the following rescaling,
$$\tilde t=rt,\;\;\tilde u=\ds\f{u}{k},\;\;\tilde v=\ds\f{bv}{r},\;\tilde \alpha= \alpha k\;\;\tilde \beta=\ds\f{\beta r}{b},\;\;\tilde d=\ds\f{d}{r},\;\;\tilde c=\ds\f{ck}{r},\;\;\tilde d_1=\ds\f{d_1}{r},\;\;\tilde d_2=\ds\f{d_2}{r},$$
and dropping the tilde sign,
system \eqref{CM1} can be simplified as follows:
\begin{equation}\label{CM}
\begin{cases}
  \ds\frac{\partial u}{\partial t}-d_1\Delta u=u\left(1-u\right)-\ds\frac{uv}{(1+\alpha u)(1+\beta v)}, & x\in \Omega,\; t>0,\\
 \ds\frac{\partial v}{\partial t}-d_2\Delta v=-dv+\ds\frac{cuv}{(1+\alpha u)(1+\beta v)}, & x\in\Omega,\; t>0,\\
 \partial_\nu  u=\partial_\nu
  v=0,& x\in \partial \Omega,\;
 t>0,\\
 u(x,0)=u_0(x)\ge(\not\equiv)0, \;\; v(x,0)=v_0(x)\ge(\not\equiv)0,& x\in\Omega.
\end{cases}
\end{equation}
Here parameter $c$ represents the conversion rate of the predator, all the parameters are positive, and $\Omega$ is a bounded domain in $\mathbb{R}^N$ ($N\le 3$) with a
smooth boundary $\partial \Omega$.
The results in \cite{Sambath} are mainly derived under certain conditions where system \eqref{CM} has a unique constant positive
equilibrium. However, there are two or three constant positive equilibria of system \eqref{CM} under certain conditions.
The main purpose of this paper is to understand the stationary solutions even when system \eqref{CM} has more than one constant positive equilibrium. The rest of the paper is organized as follows. In Section 2, we study the existence and global stability of constant positive equilibria of system \eqref{CM}.
In Section 3, we establish some existence and nonexistence results on nonconstant steady states of system \eqref{CM}.
 Throughout this paper, $\mathbb{N}_0 =\mathbb{N}\cup \{0\}$, where $\mathbb{N}$ is the set of natural numbers, and \begin{equation}\label{mmu}0=\mu_0<\mu_1<\mu_2<\cdots<\mu_j<\cdots
 \end{equation}
 are the eigenvalues of operator $-\Delta$ in $\Omega$ with the homogeneous Neumann boundary condition.

\section {Equilibria and stability}
In this section, we consider the existence and stability of constant positive equilibria of system \eqref{CM}. One can easily check that $(u,v)$ is a constant positive equilibrium of system \eqref{CM} if and only if $u\in(0,1)$ is a solution of the following equation
\begin{equation}\label{uvs}
\ds\f{d(1+\alpha u)}{\beta c u}=G(u),
\end{equation}
where
\begin{equation}\label{gu}
G(u)=\ds\f{1}{\beta}-(1-u)(1+\alpha u).
\end{equation}
In the following, we will give two lemmas on the relations between parameter $c$ and the solution $u$ of Eq. \eqref{uvs}. In fact, parameter $c$ can be regarded as a function of $u$, defined by
\begin{equation}\label{cu}
C(u)=\ds\f{d(1+\alpha u)}{\beta uG(u)}.
\end{equation}
Noticing that $c$ is positive, we see that the domain of $C(u)$ is
\begin{equation}\label{dcu}
\mathcal {D}\left(C(u)\right)=\{u\in(0,1): G(u)>0\}.
\end{equation}
We first consider the case of $\alpha\le 1$, where $G(u)$ is strictly increasing.
\begin{lemma}\label{l1}
Assume that $\alpha\le1$.
\begin{enumerate}
\item [(i)] If $\beta\le1$, then $\mathcal {D}\left(C(u)\right)=(0,1)$, where $\mathcal {D}\left(C(u)\right)$ is defined as in Eq. \eqref{dcu}, and $C'(u)<0$. Moreover,
    $$\lim_{u\to 0^+} C(u)=\infty, \;\;\text{and}\;\;\lim_{u\to 1^-} C(u)=d(1+\alpha).$$
\item [(ii)] If $\beta>1$, then $G(u)$ has a unique positive zero $u_*$, $\mathcal {D}\left(C(u)\right)=(u_*,1)$, and $C'(u)<0$ for $u\in(u_*,1)$. Moreover,
    $$\lim_{u\to u_*^+} C(u)=\infty, \;\;\text{and}\;\;\lim_{u\to 1^-} C(u)=d(1+\alpha).$$
\end{enumerate}
\end{lemma}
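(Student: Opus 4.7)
The plan is to treat both parts together by first analyzing the auxiliary function $G(u)$ and then differentiating $C(u)$ logarithmically.

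First I would compute $G'(u) = 1-\alpha + 2\alpha u$ directly from the definition $G(u) = 1/\beta - (1-u)(1+\alpha u)$. Since we assume $\alpha \le 1$, both $1-\alpha \ge 0$ and $2\alpha u > 0$ on $(0,1)$, so $G'(u) > 0$ on $(0,1)$ and $G$ is strictly increasing on $[0,1]$. The boundary values are $G(0) = 1/\beta - 1$ and $G(1) = 1/\beta > 0$. In case (i), $\beta \le 1$ gives $G(0) \ge 0$, so monotonicity yields $G(u) > 0$ for $u \in (0,1)$, and hence $\mathcal{D}(C(u)) = (0,1)$. In case (ii), $\beta > 1$ gives $G(0) < 0 < G(1)$, so by the intermediate value theorem and strict monotonicity, $G$ has a unique zero $u_* \in (0,1)$, and $G(u) > 0$ precisely on $(u_*, 1)$, which is the domain of $C$.

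Next, for the sign of $C'(u)$, I would use the logarithmic derivative trick rather than computing the quotient directly. Taking $\log$ of the definition $C(u) = \frac{d(1+\alpha u)}{\beta u G(u)}$ and differentiating gives
\begin{equation*}
\frac{C'(u)}{C(u)} = \frac{\alpha}{1+\alpha u} - \frac{1}{u} - \frac{G'(u)}{G(u)}.
\end{equation*}
The first two terms combine telescopically as
\begin{equation*}
\frac{\alpha}{1+\alpha u} - \frac{1}{u} = \frac{\alpha u - (1+\alpha u)}{u(1+\alpha u)} = -\frac{1}{u(1+\alpha u)},
\end{equation*}
which is strictly negative on the relevant intervals. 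On $\mathcal{D}(C(u))$ we have $G(u) > 0$, and from the first step $G'(u) > 0$, so the remaining term $-G'(u)/G(u)$ is also negative. Since $C(u) > 0$ on its domain, I conclude $C'(u) < 0$ throughout $\mathcal{D}(C(u))$ in both cases.

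Finally I would compute the boundary limits. The limit $\lim_{u \to 1^-} C(u) = d(1+\alpha)$ in both (i) and (ii) follows by direct substitution using $G(1) = 1/\beta$, giving $d(1+\alpha)/(\beta \cdot 1 \cdot 1/\beta)$. For the blow-up at the left endpoint in (ii), $u_* > 0$ and $G(u) \to G(u_*) = 0^+$ force $C(u) \to \infty$. For (i), if $\beta < 1$ then $G(0) > 0$ and the factor $u$ in the denominator drives $C(u) \to \infty$; if $\beta = 1$ then $G(0) = 0$ combined with $u \to 0$ gives an even stronger divergence. I do not expect any genuine obstacle: the only point needing mild care is the derivative sign, where one must notice the telescoping identity; everything else is a monotonicity or continuity argument applied to $G$.
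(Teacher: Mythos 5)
Your proposal is correct and follows essentially the same route as the paper: the paper also establishes the domain from the monotonicity and boundary values of $G$, and obtains $C'(u)<0$ by observing that $\bigl[\tfrac{d(1+\alpha u)}{\beta u}\bigr]'<0$ and $G'(u)>0$ with $G>0$ on the domain, which is exactly what your logarithmic-derivative identity encodes. The limit computations you add are routine and consistent with what the paper asserts without detail.
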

\begin{proof}
We only prove part $(ii)$, and part $(i)$ can be proved similarly. Since $\beta >1$, we see that $G(u)$
has a unique positive zero $u_*\in(0,1)$, and $G(u)>0$ if and only if $u\in(u_*,1)$, which leads to $\mathcal {D}\left(C(u)\right)=(u_*,1)$. Direct computation yields $G'(u)>0$ and $\ds\left[\f{d(1+\alpha u)}{\beta u}\right]'<0$ for $u\in(u_*,1)$, and hence $C'(u)<0$ for $u\in(u_*,1)$.
\end{proof}
Then we consider the case of $\alpha>1$, which is more complicated than the above case (see Fig. \ref{fig1} for the sketch maps of function $C(u)$ under different conditions).
\begin{figure}[htbp]
\centering\includegraphics[width=0.5\textwidth]{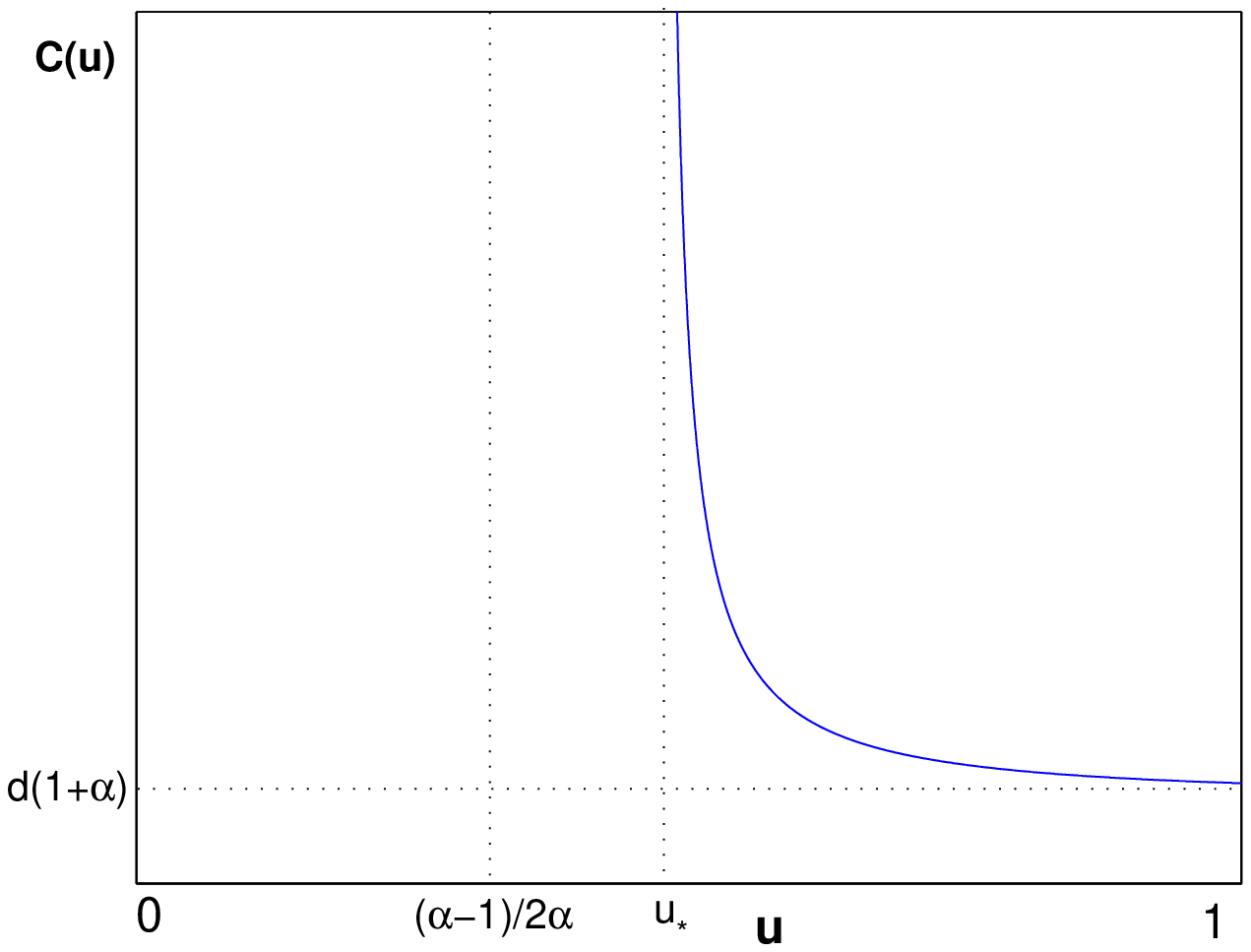}\includegraphics[width=0.5\textwidth]{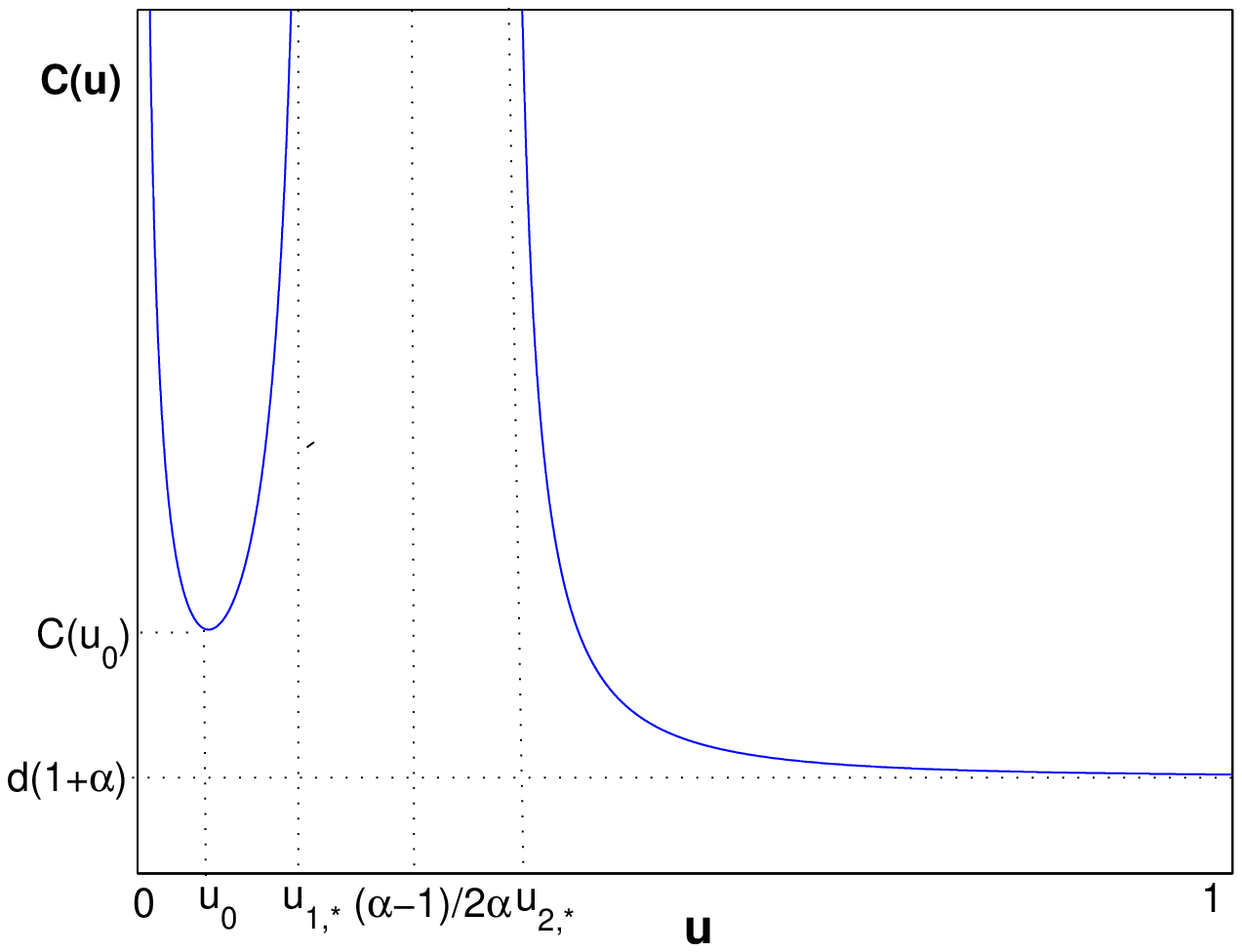}
\centering\includegraphics[width=0.5\textwidth]{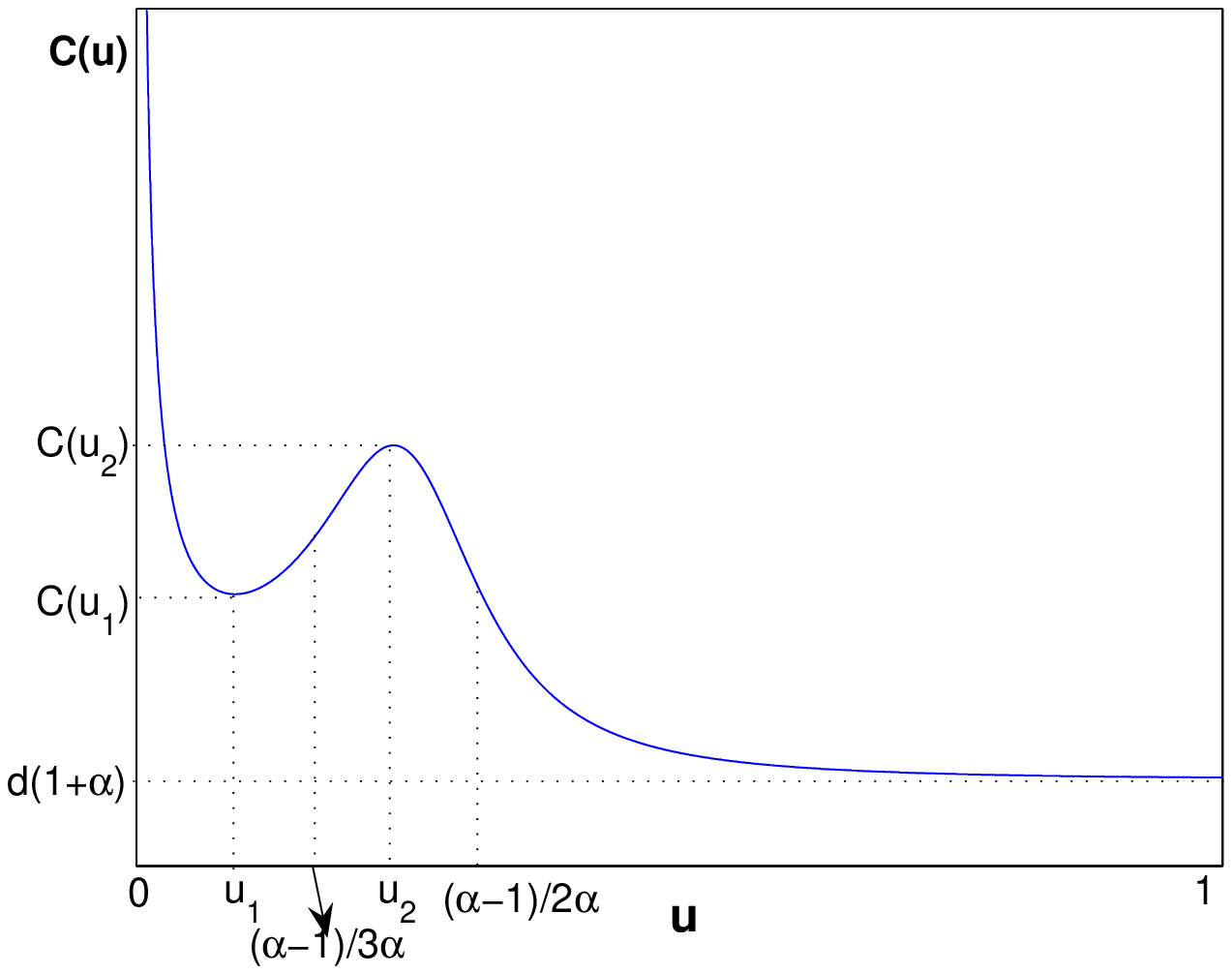}\includegraphics[width=0.5\textwidth]{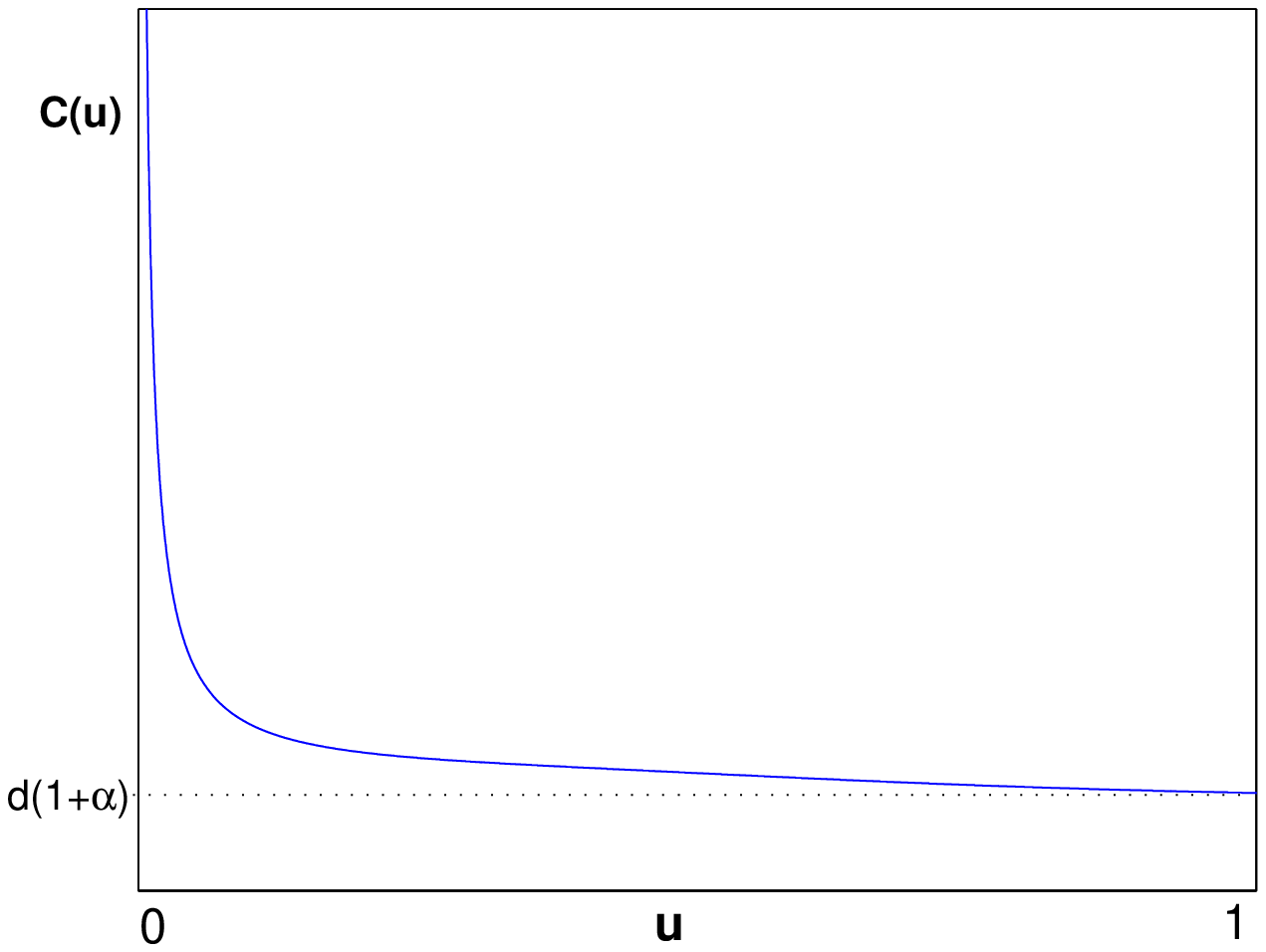}
\caption{Sketch maps of $C(u)$ for $\alpha>1$. (Upper
left) $\beta\ge1$; (Upper right) $\ds\f{4\alpha}{(1+\alpha)^2}\le\beta<1$; (Lower left) $\gamma(\alpha)\le \beta<\ds\f{4\alpha}{(1+\alpha)^2}$; (Lower right) $0<\beta<\gamma(\alpha)$.
  \label{fig1}}
\end{figure}
\begin{lemma}\label{l2}
Assume that $\alpha>1$.
\begin{enumerate}
\item [(i)] If $\beta\ge1$, then $G(u)$ has a unique positive zero $u_*$ satisfying $u_*\ge\ds\f{\alpha-1}{\alpha}$, and $C(u)$ satisfies the following properties.
    \begin{enumerate}
    \item [($i_1$)] $\mathcal {D}\left(C(u)\right)=(u_*,1)$, and $C'(u)<0$ for $u\in(u_*,1)$.
    \item [($i_2$)] $\lim_{u\to u_*^+} C(u)=\infty$, and $\lim_{u\to 1^-} C(u)=d(1+\alpha)$.
    \end{enumerate}
\item [(ii)] If $\ds\f{4\alpha}{(1+\alpha)^2}\le\beta<1$, then $G(u)$ has two positive zeros $u_{1,*}$ and $u_{2,*}$ satisfying $u_{1,*}<\ds\f{\alpha-1}{2\alpha}< u_{2,*}$ for $\beta>\ds\f{4\alpha}{(1+\alpha)^2}$ and $u_{1,*}= u_{2,*}=\ds\f{\alpha-1}{2\alpha}$ for $\beta=\ds\f{4\alpha}{(1+\alpha)^2}$, and $C(u)$ satisfies the following properties.
    \begin{enumerate}
     \item [($ii_1$)]$\mathcal {D}\left(C(u)\right)=(0,u_{1,*})\cup(u_{2,*},1)$.
     \item [($ii_2$)] There exists $u_0\in(0,u_{1,*})$ such that $C'(u_0)=0$, $C'(u)<0$ for $u\in (0,u_0)\cup(u_{2,*},1)$, and $C'(u)>0$ for $u\in(u_0,u_{1,*})$.
   \item [($ii_3$)]$\lim_{u\to 0^+} C(u)=\lim_{u\to u^-_{1,*}} C(u)=\lim_{u\to u^+_{2,*}} C(u)=\infty$, and $\;\;\lim_{u\to 1^-} C(u)=d(1+\alpha)$.
    \end{enumerate}
\item [(iii)] If $\gamma(\alpha)\le \beta<\ds\f{4\alpha}{(1+\alpha)^2}$, where
\begin{equation}\label{ggm}
\gamma(\alpha)=\ds\f{27\alpha}{(\alpha-1)^2(\alpha+8)+27\alpha},
\end{equation}
then $G(u)$ has no zeros, and $C(u)$ satisfies the following properties.
\begin{enumerate}
\item [($iii_1$)] $\mathcal {D}\left(C(u)\right)=(0,1)$.
\item [($iii_2$)] There exist $u_1$ and $u_2$ such that $C'(u_1)=C'(u_2)=0$, $C'(u)<0$ for $u\in (0,u_1)\cup(u_2,1)$, and $C'(u)>0$ for $u\in (u_1,u_2)$. Here $u_1$ and $u_2$ satisfy $u_1<\ds\f{\alpha-1}{3\alpha}< u_2<\ds\f{\alpha-1}{2\alpha}$ for $\beta >\gamma(\alpha)$ and $u_1=u_2=\ds\f{\alpha-1}{3\alpha}$ for $\beta=\gamma(\alpha)$.
\item [($iii_3$)]$\lim_{u\to 0^+} C(u)=\infty$, and $\lim_{u\to 1^-} C(u)=d(1+\alpha)$.
\end{enumerate}
\item [(iv)] If $0<\beta<\gamma(\alpha)$, where $\gamma(\alpha)$ is defined as in Eq. \eqref{ggm}, then
$G(u)$ has no zeros, and $C(u)$ satisfies the following properties.
\begin{enumerate}
\item [($iv_1$)]$\mathcal {D}\left(C(u)\right)=(0,1)$, and $C'(u)<0$ for $u\in(0,1)$.
\item [($iv_2$)]$\lim_{u\to 0^+} C(u)=\infty$, and $\lim_{u\to 1^-} C(u)=d(1+\alpha)$.
\end{enumerate}
\end{enumerate}
\end{lemma}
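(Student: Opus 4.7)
The plan is to reduce everything to tracking the sign of a single cubic by noticing that direct differentiation of $C(u)=\frac{d(1+\alpha u)}{\beta u G(u)}$ yields
$$C'(u)=-\frac{d\,\bigl[G(u)+u(1+\alpha u)G'(u)\bigr]}{\beta u^{2}G(u)^{2}},$$
so on $\mathcal D(C(u))$ the sign of $C'(u)$ is opposite to that of $F(u):=G(u)+u(1+\alpha u)G'(u)$. Expanding with $G(u)=\frac{1}{\beta}-1-(\alpha-1)u+\alpha u^{2}$ shows
$$F(u)=2\alpha^{2}u^{3}+\alpha(4-\alpha)u^{2}-2(\alpha-1)u+\frac{1}{\beta}-1,$$
whose derivative factors cleanly, with the only root in $(0,1)$ being $u_{c}=\frac{\alpha-1}{3\alpha}$. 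Hence $F$ is strictly decreasing on $(0,u_{c})$ and strictly increasing on $(u_{c},1)$, and a short algebraic simplification gives the minimum value
$$F(u_{c})=\frac{1}{\beta}-1-\frac{(\alpha-1)^{2}(\alpha+8)}{27\alpha},$$
which vanishes precisely at $\beta=\gamma(\alpha)$. Alongside this I use the standard parabolic analysis of $G$: it has minimum $\frac{1}{\beta}-\frac{(\alpha+1)^{2}}{4\alpha}$ at $\frac{\alpha-1}{2\alpha}$, with $G(0)=\frac{1}{\beta}-1$ and $G(1)=\frac{1}{\beta}>0$, which pins down the number and rough location of the zeros of $G$ in each of the four $\beta$-regimes.

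With these two ingredients the four cases unfold mechanically. In (i), $\beta\geq 1$ forces $G(0)\leq 0$ and hence a single zero $u_{*}$; the estimate $u_{*}\geq\frac{\alpha-1}{\alpha}$ comes from $G\bigl(\frac{\alpha-1}{\alpha}\bigr)=\frac{1}{\beta}-1\leq 0$, and on $(u_{*},1)$ one has $G'>0$, so $F>0$ and $C'<0$. Case (ii) has $G$ with two zeros $u_{1,*}<\frac{\alpha-1}{2\alpha}<u_{2,*}$; on $(u_{2,*},1)$ the same $G'>0$ argument gives $C'<0$, while on $(0,u_{1,*})$ I combine $F(0)=\frac{1}{\beta}-1>0$ with the identity $F(u_{1,*})=u_{1,*}(1+\alpha u_{1,*})G'(u_{1,*})<0$ (since $G'(u_{1,*})<0$) and the one-minimum shape of $F$ to isolate a unique sign change at some $u_{0}\in(0,u_{1,*})$. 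Case (iii) has $G>0$ throughout $(0,1)$ but $F(u_{c})\leq 0$, producing two zeros $u_{1}\leq u_{c}\leq u_{2}$ of $F$; the bound $u_{2}<\frac{\alpha-1}{2\alpha}$ comes from $F\bigl(\frac{\alpha-1}{2\alpha}\bigr)=G\bigl(\frac{\alpha-1}{2\alpha}\bigr)>0$ (using $\beta<\frac{4\alpha}{(1+\alpha)^{2}}$) together with monotonicity of $F$ on $(u_{c},1)$. Case (iv) is the cleanest: $F(u_{c})>0$, so $F>0$ on $(0,1)$ and $C'<0$ everywhere.

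The stated limits are routine: $C(u)\to\infty$ as $u\to 0^{+}$ or as $u$ approaches a zero of $G$ from inside the domain (numerator bounded, denominator $\to 0^{+}$), and $C(1^{-})=d(1+\alpha)$ because $G(1)=\frac{1}{\beta}$. The main obstacle I anticipate is bookkeeping: keeping the case analysis tidy while verifying the precise interlacings of $u_{1},u_{2}$ with $\frac{\alpha-1}{3\alpha}$ and $\frac{\alpha-1}{2\alpha}$ in case (iii), and of $u_{1,*},u_{2,*}$ with $\frac{\alpha-1}{2\alpha}$ in case (ii); each such inequality reduces to evaluating $F$ or $G$ at those distinguished points and reading off signs from the single-minimum structure established at the outset.
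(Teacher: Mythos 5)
Your proposal is correct and follows essentially the same route as the paper: your cubic $F(u)=G(u)+u(1+\alpha u)G'(u)$ is exactly the negative of the paper's $H(u)$ in Eq. \eqref{hu}, and the case analysis via the single critical point $\ds\f{\alpha-1}{3\alpha}$, the value $F\left(\ds\f{\alpha-1}{2\alpha}\right)=\ds\f{1}{\beta}-\ds\f{(\alpha+1)^2}{4\alpha}$, and the threshold $\gamma(\alpha)$ matches the paper's proof step for step. The only loose end is the boundary case $\beta=\ds\f{4\alpha}{(1+\alpha)^2}$ in part (ii), where $G'(u_{1,*})=0$ gives $F(u_{1,*})=0$ rather than $F(u_{1,*})<0$; the conclusion still follows because $\ds\f{4\alpha}{(1+\alpha)^2}>\gamma(\alpha)$ forces $F(u_c)<0$ there, but you should note this (the paper is equally terse on this point).
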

\begin{proof}
Similarly to the arguments in the proof of Lemma \ref{l1}, we can prove part $(i)$. In the following we only consider the case of $\beta<1$. One checks that
\begin{equation}\label{cud}
C'(u)=\ds\f{dH(u)}{\beta u^2G^2(u)},
\end{equation}
where
\begin{equation}\label{hu}
H(u)=-2\alpha^2u^3+\alpha(\alpha-4)u^2+2(\alpha-1)u-\left(\ds\f{1}{\beta}-1\right).
\end{equation}
Therefore, the sign of $C(u)$ is determined by $H(u)$. It is easily seen that
$$H(0)<0,\;\;H(1)<0,\;\;H\left(\ds\f{\alpha-1}{2\alpha}\right)=\ds\f{(\alpha+1)^2}{4\alpha}-\ds\f{1}{\beta},$$
$H'(u)$ has a unique positive zero $\ds\f{\alpha-1}{3\alpha}$, and $$H\left(\ds\f{\alpha-1}{3\alpha}\right)>(<)0\;\;\text{if}\;\;
\beta>(<)\gamma(\alpha).$$
When $\beta<\ds\f{4\alpha}{(1+\alpha)^2}$, $G(u)$ has no zeros, which implies that $\mathcal {D}\left(C(u)\right)=(0,1)$. From above analysis, we see that
\begin{enumerate}
\item [(1)] if $\gamma(\alpha)<\beta<\ds\f{4\alpha}{(1+\alpha)^2}$, then $C'(u)$ has two positive zeros $u_1$ and $u_2$ satisfying $u_1<\ds\f{\alpha-1}{3\alpha}< u_2<\ds\f{\alpha-1}{2\alpha}$, $C'(u)<0$ for $u\in (0,u_1)\cup(u_2,1)$, and $C'(u)>0$ for $u\in (u_1,u_2)$;
\item [(2)] if $\beta=\gamma(\alpha)$, then $C'(u)$ has a unique positive zero $\ds\f{\alpha-1}{3\alpha}$, and $C'(u)<0$ for $u\in \left(0,\ds\f{\alpha-1}{3\alpha}\right)\bigcup\left(\ds\f{\alpha-1}{3\alpha},1\right)$;
\item [(3)] if $\beta<\gamma(\alpha)$, then $C'(u)$ has no positive zeros, and $C'(u)<0$ for $u\in(0,1)$.
\end{enumerate}
Therefore, parts $(iii)$ and $(iv)$ are proved.

Finally, we consider the case of $\ds\f{4\alpha}{(1+\alpha)^2}<\beta<1$. Then $G(u)$ has two zeros $u_{1,*}$ and $u_{2,*}$ satisfying $u_{1,*}<\ds\f{\alpha-1}{2\alpha}<u_{2,*}$, which leads to
$\mathcal {D}\left(C(u)\right)=(0,u_{1,*})\cup(u_{2,*},1)$. Noticing that $G'(u)>0$ and $\left[\ds\f{d(1+\alpha u)}{\beta u}\right]'<0$ for $u\in(u_{2,*},1)$, we have
$C'(u)<0$ for $u\in(u_{2,*},1)$. Because $$H\left(\ds\f{\alpha-1}{2\alpha}\right)=\ds\f{(\alpha+1)^2}{4\alpha}-\ds\f{1}{\beta}>0,$$ and $\lim_{u\to 0^+} C(u)=\lim_{u\to u^-_{1,*}} C(u)=\infty$, we obtain that there exists $u_0\in(0,u_{1,*})$ such that $C'(u_0)=0$, $C'(u)<0$ for $u\in (0,u_0)$, and $C'(u)>0$ for $u\in(u_0,u_{1,*})$. Therefore, conclusion $(ii)$ is proved.
\end{proof}

\begin{remark}\label{re1}
We remark that if $\alpha>1$ and $\beta<1$, then
$$G(u)=\ds\f{1}{\beta}-1$$ has a unique positive root $\check u=\ds\f{\alpha-1}{\alpha}\in\mathcal {D}\left(C(u)\right)$ and $C'\left(u\right)<0$ for $u\in \left[\check u,1\right)$.
Moreover,
we have $G\left(\check u\right)>G(u)$ for any $u\in\left(0,\check u\right)$, which leads to
$C\left(\check u\right)<C(u)$ for any $u\in\left(0,\check u\right)$. Hence, if $\ds\f{4\alpha}{(1+\alpha)^2}\le\beta<1$ (respectively, $\gamma(\alpha)< \beta<\ds\f{4\alpha}{(1+\alpha)^2}$), then $C\left(u\right)<C(u_0)$ (respectively, $C\left( u\right)<C(u_1)$) for all $u\in \left[\check u,1\right)$, where $u_0$ and $u_1$ are defined as in Lemma \ref{l2}.
\end{remark}
Then,
by Lemmas \ref{l1} and \ref{l2} and Remark \ref{re1}, we can derive the following results on constant positive equilibria of system \eqref{CM}.
\begin{theorem}\label{equ}
System \eqref{CM} has no constant positive equilibrium for $c\in(0,d(1+\alpha)]$, and at least one constant positive equilibrium for $c\in(d(1+\alpha),\infty)$. Moreover,
\begin{enumerate}
\item[(i)] if $\alpha\le1$, or $\alpha >1$ but $\beta\in (0, \gamma(\alpha)]\cup[1,\infty)$ , where $\gamma(\alpha)$ is defined as in Eq. \eqref{ggm}, then system \eqref{CM} has a unique constant positive equilibrium for $c\in(d(1+\alpha),\infty)$.
\item[(ii)] if \begin{equation}
\alpha >1\;\;\text{and}\;\;\ds\f{4\alpha}{(1+\alpha)^2}\le\beta<1,
\end{equation}
then system \eqref{CM} has a unique constant positive equilibrium for $c\in(d(1+\alpha),c(u_0))$, two constant positive equilibria
for $c=c(u_0)$, and three constant positive equilibria for $c>c(u_0)$, where $u_0$ is defined as in Lemma \ref{l2}.
\item[(ii)] if \begin{equation}
\alpha >1\;\;\text{and}\;\;\gamma(\alpha)<\beta<\ds\f{4\alpha}{(1+\alpha)^2},
\end{equation}
then system \eqref{CM} has a unique constant positive equilibrium for $c\in(d(1+\alpha),c(u_1))\cup(c(u_2),\infty)$, two constant positive equilibria
for $c=c(u_1),c(u_2)$, and three constant positive equilibria for $c\in(c(u_1),c(u_2))$, where $u_1$ and $u_2$ are defined as in Lemma \ref{l2}.
\end{enumerate}
\end{theorem}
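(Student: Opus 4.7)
The plan is to observe that $(u,v)$ is a constant positive equilibrium of \eqref{CM} exactly when $u\in\mathcal{D}(C(u))$ satisfies $C(u)=c$, with the corresponding $v=cu(1-u)/d>0$ uniquely determined from $u$ (and automatically positive on $\mathcal{D}(C(u))$ since the defining equation forces $\beta v=[cu-d(1+\alpha u)]/[d(1+\alpha u)]$ to have the same sign as $G(u)$). Hence the entire theorem reduces to counting intersections of the horizontal line $y=c$ with the graph of $C$. Since the qualitative shape of $C$ on each component of $\mathcal{D}(C(u))$ is completely pinned down by Lemmas \ref{l1}--\ref{l2} and Remark \ref{re1}, I would carry out a case split along those lemmas.

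\textbf{Monotone cases.} When $\alpha\le 1$, or when $\alpha>1$ with $\beta\in(0,\gamma(\alpha)]\cup[1,\infty)$, Lemma \ref{l1} and parts (i), (iv) of Lemma \ref{l2} (together with the degenerate subcase $\beta=\gamma(\alpha)$ from item (2) in the proof of Lemma \ref{l2}) show that $C$ is strictly decreasing on a single interval, with left-limit $+\infty$ and right-limit $d(1+\alpha)$. Consequently $C(u)=c$ has no solution for $c\le d(1+\alpha)$ and exactly one for $c>d(1+\alpha)$, yielding part (i).

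\textbf{Non-monotone cases.} When $\alpha>1$ and $4\alpha/(1+\alpha)^2\le\beta<1$, Lemma \ref{l2}(ii) decomposes the domain as $(0,u_{1,*})\cup(u_{2,*},1)$: on the right piece $C$ strictly drops from $+\infty$ to $d(1+\alpha)$, while on the left piece it dips from $+\infty$ at $0^+$ down to an interior minimum $C(u_0)$ and back to $+\infty$ at $u_{1,*}^-$. When $\alpha>1$ and $\gamma(\alpha)<\beta<4\alpha/(1+\alpha)^2$, Lemma \ref{l2}(iii) gives $\mathcal{D}(C(u))=(0,1)$ with $C$ decreasing, then increasing, then decreasing, so $u_1$ is a local minimum and $u_2$ a local maximum of $C$. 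In either regime a direct intersection count by pieces delivers the numbers claimed in parts (ii) and (iii); for example, in the latter case $c\in(C(u_1),C(u_2))$ produces exactly one intersection on each of $(0,u_1)$, $(u_1,u_2)$, $(u_2,1)$, hence three equilibria.

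\textbf{Main obstacle.} The only substantive point beyond reading off the monotonicity charts is checking that the asymptotic value $d(1+\alpha)$ lies strictly below the critical values $C(u_0)$ and $C(u_1)$ in the two non-monotone regimes; otherwise a horizontal line $y=c$ with $c$ just above $d(1+\alpha)$ could pick up extra intersections near $u=1$ and destroy the count. This is exactly what Remark \ref{re1} is designed to supply: applied at $\check u=(\alpha-1)/\alpha$, which lies in $(u_{2,*},1)$ in case (ii) and in $(u_2,1)$ in case (iii), the strict bound $C(\check u)<C(u_0)$ (respectively $C(\check u)<C(u_1)$), combined with strict monotonicity of $C$ on $(\check u,1)$ and the limit $\lim_{u\to 1^-}C(u)=d(1+\alpha)$, gives $d(1+\alpha)<C(\check u)<C(u_0)$ (respectively $<C(u_1)$). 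Once this strict separation is in hand, the horizontal-line count in each case is automatic.
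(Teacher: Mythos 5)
Your proposal is correct and follows exactly the route the paper intends: the paper states Theorem \ref{equ} without a written proof, presenting it as a direct consequence of Lemmas \ref{l1}--\ref{l2} and Remark \ref{re1}, and your intersection count of the line $y=c$ with the graph of $C$, together with the key ordering $d(1+\alpha)<C(\check u)<C(u_0)$ (resp. $C(u_1)$) supplied by Remark \ref{re1}, is precisely the omitted derivation.
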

Then, we consider the stability of constant positive equilibria. For the simplicity of notations, we denote
\begin{equation}\label{pspi}
\begin{split}
&\phi_1(u)=\ds\f{u}{1+\alpha u},\;\;\phi_2(v)=\ds\f{v}{1+\beta v},\\
&\psi_1(u)=(1-u)(1+\alpha u),\;\;\psi_2(v)=-d(1+\beta v),
\end{split}
\end{equation}
and
\begin{equation}\label{gug}
G(\mathbf u)=\left(\begin{array}{c}
\phi_1(u)\left(\psi_1(u)-\phi_2(v)\right)\\
\phi_2(v)\left(\psi_2(v)+c\phi_1(u)\right)
\end{array}\right)\;\;\text{for}\;\;\mathbf u=(u,v).
\end{equation}
Let $\tilde {\mathbf u}=(\tilde u,\tilde v)$ be the positive equilibrium of system \eqref{CM}. Then, the stability of $\tilde {\mathbf u}$ is associated with the following eigenvalue problem
\begin{equation}\label{egen}
D\Delta \mathbf u+G_{\mathbf u}(\mathbf {\tilde u})\mathbf u=\mu  \mathbf u,
\end{equation}
where
\begin{equation}\label{cgu}
D=\left(\begin{array}{cc}
d_1&0\\
0&d_2
\end{array}\right),\;\;G_{\mathbf u}(\mathbf {\tilde u})=\left(\begin{array}{cc}
\phi_1(\tilde u)\psi'_1(\tilde u)&-\phi_1(\tilde u)\phi'_2(\tilde v)\\
c\phi_2(\tilde v)\phi'_1(\tilde u)&-d\beta \phi_2(\tilde v)
\end{array}\right),
\end{equation}
and $\tilde {\mathbf u}$ is locally asymptotically stable if all the eigenvalues of problem \eqref{egen} have negative real parts. We remark that $\mathbf{u}$ in Eq. \eqref{egen} should be replaced by $\mathbf{u}^T$ actually, and here we still use $\mathbf{u}$ for simplicity.
In fact, $\mu$ is an eigenvalue of problem \eqref{egen} if and only if $\mu$ is an eigenvalue of matrix $Q_j(\mathbf {\tilde u})=-\mu_jD+G_{\mathbf u}(\mathbf {\tilde u})$ for some $j\in\mathbb{N}_0$, where $\{\mu_j\}_{j=0}^\infty$ is defined as in Eq. \eqref{mmu}. Then, we obtain a sequence of characteristic equations \begin{equation}\label{char}
\la^2-\text{Tr } Q_j(\mathbf {\tilde u})\la+\text{Det } Q_j(\mathbf {\tilde u})=0,\;\;j\in \mathbb{N}_{0},\end{equation}
where
\begin{equation}\label{trd}
\begin{split}
&\text{Tr } Q_j(\mathbf {\tilde u})=-(d_1+d_2)\mu_j+\phi_1(\tilde u)\psi'_1(\tilde u)-d\beta \phi_2(\tilde v),\\
&\text{Det } Q_j(\mathbf {\tilde u})=d_1d_2\mu_j^2+\left(d\beta \phi_2(\tilde v)d_1-d_2\phi_1(\tilde u)\psi'_1(\tilde u)\right)\mu_j+\text{Det }G_{\mathbf u}(\mathbf {\tilde u}),\\
&\text{Det } G_{\mathbf u}(\mathbf {\tilde u})=\text{Det } Q_0(\mathbf {\tilde u})=\phi_1(\tilde u)\phi_2(\tilde v)\left(-d\beta\psi'_1(\tilde u)+c\phi'_1(\tilde u)\phi'_2(\tilde v)\right).
\end{split}
\end{equation}
Hence, $\tilde {\mathbf u}$ is locally asymptotically stable if
$\text{Tr }Q_j(\mathbf {\tilde u})<0$ and $\text{Det }Q_j(\mathbf {\tilde u})>0$
 for all $j\in\mathbb{N}_{0}$.
 To analyze the stability of constant positive equilibria of system \eqref{CM}, we first give the following result for further application.
 \begin{lemma}\label{sig}
Let $\tilde {\mathbf u}=(\tilde u,\tilde v)$ be a constant positive equilibrium of system \eqref{CM}. Then $\text{Det }G_{\mathbf u}(\tilde {\mathbf u})$ has the same sign as $-C'(\tilde u)$.
\end{lemma}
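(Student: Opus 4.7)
The plan is to reduce both $\text{Det } G_{\mathbf u}(\tilde{\mathbf u})$ and $-C'(\tilde u)$ to manifestly equal-signed expressions by exploiting the equilibrium identities.

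First I would extract the key equilibrium relations. Since $\tilde{\mathbf u}$ is a positive equilibrium, the first component of $G(\tilde{\mathbf u})=0$ gives $\psi_1(\tilde u)=\phi_2(\tilde v)$, which combined with $G(u)=\frac{1}{\beta}-\psi_1(u)$ yields the useful identity $G(\tilde u)=\frac{1}{\beta(1+\beta\tilde v)}$. The second component gives $d=\frac{c\tilde u}{(1+\alpha\tilde u)(1+\beta\tilde v)}$, i.e.\
\[
(1+\alpha\tilde u)(1+\beta\tilde v)=\frac{c\tilde u}{d}.
\]
These two relations are the bridge between the determinant (expressed in the $\phi_i,\psi_i$ variables) and the scalar function $C$.

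Next I would compute the determinant directly from Eq.~\eqref{trd}. Because $\phi_1'(u)=1/(1+\alpha u)^2$, $\phi_2'(v)=1/(1+\beta v)^2$, and $\psi_1'(u)=-G'(u)$, the factor $\phi_1(\tilde u)\phi_2(\tilde v)$ is strictly positive, so $\text{Det } G_{\mathbf u}(\tilde{\mathbf u})$ shares sign with
\[
-d\beta\psi_1'(\tilde u)+c\phi_1'(\tilde u)\phi_2'(\tilde v)
= d\beta G'(\tilde u)+\frac{c}{[(1+\alpha\tilde u)(1+\beta\tilde v)]^2}
= d\beta G'(\tilde u)+\frac{d^2}{c\tilde u^2},
\]
where the last step uses the equilibrium identity above. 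Dividing through by the positive constant $d$, the sign of $\text{Det } G_{\mathbf u}(\tilde{\mathbf u})$ equals the sign of $\beta G'(\tilde u)+\dfrac{d}{c\tilde u^2}$.

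Then I would compute $C'(\tilde u)$ by logarithmic differentiation of $C(u)=\frac{d(1+\alpha u)}{\beta u G(u)}$:
\[
\frac{C'(u)}{C(u)}=\frac{\alpha}{1+\alpha u}-\frac{1}{u}-\frac{G'(u)}{G(u)}
=-\frac{1}{u(1+\alpha u)}-\frac{G'(u)}{G(u)}.
\]
Since $C(\tilde u)=c>0$, $\tilde u(1+\alpha\tilde u)>0$, and $G(\tilde u)>0$, the sign of $-C'(\tilde u)$ matches that of
\[
\frac{G(\tilde u)}{\tilde u(1+\alpha\tilde u)}+G'(\tilde u).
\]
Substituting $G(\tilde u)=\frac{1}{\beta(1+\beta\tilde v)}$ and then $(1+\alpha\tilde u)(1+\beta\tilde v)=\frac{c\tilde u}{d}$ converts the first summand into $\frac{d}{c\beta\tilde u^2}$. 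Multiplying by the positive quantity $\beta$ gives exactly $\beta G'(\tilde u)+\frac{d}{c\tilde u^2}$, matching the expression obtained for $\text{Det } G_{\mathbf u}(\tilde{\mathbf u})$.

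There is no genuine obstacle here; this is a bookkeeping argument. The only subtlety to watch is to keep track that all the factors pulled out (namely $\phi_1(\tilde u)\phi_2(\tilde v)$, $d$, $C(\tilde u)=c$, $\tilde u(1+\alpha\tilde u)$, $G(\tilde u)$, and $\beta$) are strictly positive at a positive equilibrium, so that none of them can flip the sign in either direction of the comparison.
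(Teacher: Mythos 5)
Your proof is correct and follows essentially the same route as the paper's: both differentiate the relation $C(u)G(u)=d/(\beta\phi_1(u))$ (yours via logarithmic differentiation of $C$) and invoke the equilibrium identity $c\,\phi_1(\tilde u)=d(1+\beta\tilde v)$ to exhibit $-C'(\tilde u)$ and $\text{Det}\,G_{\mathbf u}(\tilde{\mathbf u})$ as positive multiples of one another. The only cosmetic difference is that you reduce both quantities to the common expression $\beta G'(\tilde u)+d/(c\tilde u^{2})$, whereas the paper leaves them as positive multiples of a pair of mutually opposite brackets.
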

\begin{proof}
From Eqs. \eqref{uvs} and \eqref{gu}, we see that
$$C(u)G(u)=C(u)\left(\ds\f{1}{\beta}-\psi_1(u)\right)=\ds\f{d}{\beta \phi_1(u)},$$
where $G(u)$ is defined as in Eq. \eqref{gu}, and $\phi_1$ and $\psi_1$ are defined as in Eq. \eqref{pspi}.
Then
$$C'(\tilde u)G(\tilde u)=C(\tilde u)\psi'_1(\tilde u)-\ds\f{d\phi'_1(\tilde u)}{\beta\phi^2_1(\tilde u)}.$$
Noticing that $C(\tilde u)\phi_1(\tilde u)=d(1+\beta \tilde v)$, we have
$$C'(\tilde u)=\ds\f{(1+\beta \tilde v)}{\beta \phi_1(\tilde u)G(\tilde u)}\left[d\beta \psi'_1(\tilde u)-\ds\f{d\phi'_1(\tilde u)}{\phi_1(\tilde u)(1+\beta \tilde v)}\right].$$
An easy calculation implies that
$$\text{Det } G_{\mathbf u}(\mathbf {\tilde u})=\phi_1(\tilde u)\phi_2(\tilde v)\left[-d\beta \psi'_1(\tilde u)+\ds\f{d\phi'_1(\tilde u)}{\phi_1(\tilde u)(1+\beta \tilde v)}\right].$$
This completes the proof.
\end{proof}
Then, by virtue of Lemma \ref{sig}, we obtain some partial results on the stability of the constant positive equilibria of system \eqref{CM} in the following.
\begin{theorem} \label{los}
Let $\tilde {\mathbf u}=(\tilde u,\tilde v)$ be a constant positive equilibrium of system \eqref{CM}. If $\psi'_1(\tilde u)<0$ and $C'(\tilde u)<0$, then $\tilde {\mathbf u}$ is locally asymptotically stable.
\end{theorem}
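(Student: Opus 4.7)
The plan is to exploit the reduction already set up in the excerpt: a constant positive equilibrium $\tilde{\mathbf u}$ is locally asymptotically stable provided $\text{Tr }Q_j(\tilde{\mathbf u})<0$ and $\text{Det }Q_j(\tilde{\mathbf u})>0$ for every $j\in\mathbb{N}_0$, where $Q_j$ is the $2\times 2$ matrix defined through Eq. \eqref{trd}. So the entire task is to verify these two sign conditions under the hypotheses $\psi_1'(\tilde u)<0$ and $C'(\tilde u)<0$, independently of $j$.

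For the trace, I would simply observe that
\[
\text{Tr }Q_j(\tilde{\mathbf u})=-(d_1+d_2)\mu_j+\phi_1(\tilde u)\psi_1'(\tilde u)-d\beta\phi_2(\tilde v).
\]
Since $\mu_j\ge 0$, $d,\beta>0$, $\phi_1(\tilde u),\phi_2(\tilde v)>0$ (as $\tilde u,\tilde v>0$) and by hypothesis $\psi_1'(\tilde u)<0$, each of the three summands is $\le 0$ and the last one is strictly negative; hence $\text{Tr }Q_j(\tilde{\mathbf u})<0$ for all $j\in\mathbb{N}_0$.

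For the determinant, I would read off from Eq. \eqref{trd} that
\[
\text{Det }Q_j(\tilde{\mathbf u})=d_1d_2\mu_j^2+\bigl(d\beta\phi_2(\tilde v)d_1-d_2\phi_1(\tilde u)\psi_1'(\tilde u)\bigr)\mu_j+\text{Det }G_{\mathbf u}(\tilde{\mathbf u}).
\]
The quadratic coefficient is nonnegative, the linear coefficient is strictly positive because $\psi_1'(\tilde u)<0$ makes $-d_2\phi_1(\tilde u)\psi_1'(\tilde u)>0$ and the remaining term is positive, and the constant term equals $\text{Det }G_{\mathbf u}(\tilde{\mathbf u})$. This is where Lemma \ref{sig} enters: since $C'(\tilde u)<0$, the lemma gives $\text{Det }G_{\mathbf u}(\tilde{\mathbf u})>0$. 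All three terms are therefore $\ge 0$ with at least one strictly positive, so $\text{Det }Q_j(\tilde{\mathbf u})>0$ uniformly in $j$. Combining the trace and determinant inequalities with the characteristic equation \eqref{char} yields that every root $\lambda$ has negative real part, proving local asymptotic stability.

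I do not foresee any genuine obstacle here: the whole argument is a sign check, and all nontrivial content has already been absorbed into Lemma \ref{sig}, which converts the opaque condition $\text{Det }G_{\mathbf u}(\tilde{\mathbf u})>0$ into the easily verified geometric condition $C'(\tilde u)<0$. The only thing to be mildly careful about is making sure the linear coefficient of $\mu_j$ in $\text{Det }Q_j$ is handled correctly under $\psi_1'(\tilde u)<0$, but this is immediate from the explicit formula.
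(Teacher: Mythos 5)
Your proposal is correct and follows exactly the paper's own argument: the paper's proof is a one-line verification that $\psi_1'(\tilde u)<0$ forces $\text{Tr }Q_j(\tilde{\mathbf u})<0$ and, together with $C'(\tilde u)<0$ via Lemma \ref{sig} giving $\text{Det }G_{\mathbf u}(\tilde{\mathbf u})>0$, forces $\text{Det }Q_j(\tilde{\mathbf u})>0$ for all $j\in\mathbb{N}_0$. You have simply spelled out the same sign check term by term, so there is nothing to add.
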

\begin{proof}
Since $\psi'_1(\tilde u)<0$ and $C'(\tilde u)<0$, we see that $\text{Tr }Q_j(\mathbf {\tilde u})<0<0$ and $\text{Det }Q_j(\mathbf {\tilde u})>0$
 for all $j\in\mathbb{N}_{0}$. This complete the proof.
\end{proof}
Finally, under certain conditions, we derive the following results
on the global stability of the constant equilibrium.
\begin{theorem}\label{glo}
\begin{enumerate}
\item [(i)] Assume that \begin{equation}\label{cond0}
c<d(1+\alpha).
\end{equation}Then equilibrium $(1,0)$ is globally attractive.
\item [(ii)] Assume that one of the following is satisfied:
\begin{align}\label{cond1}
    & \alpha\le 1\;\;\text{and}\;\; c>d(1+\alpha),\\
    \label{cond2}
    & \alpha>1, \beta\ge1\;\;\text{and}\;\;  c>d(1+\alpha), \;\;\;\;\text{or}\;\;\\
    \label{cond3}
    & \alpha>1, \beta<1\;\;\text{and}\;\; c\in\left(d(1+\alpha),C\left(\f{\alpha-1}{\alpha}\right) \right].
\end{align}
Then system \eqref{CM} has a unique constant positive equilibrium $\tilde {\mathbf u}=(\tilde u,\tilde v)$, which is globally asymptotically stable.
\end{enumerate}
\end{theorem}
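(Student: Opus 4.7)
The plan is to build Lyapunov functionals of LaSalle type, exploiting the product structure of the Crowley--Martin kinetics in the notation $\phi_1,\phi_2,\psi_1,\psi_2$ of \eqref{pspi}. A standing preliminary step is to record uniform $L^\infty$ boundedness of nonnegative solutions of \eqref{CM}, which follows from the logistic comparison $u_t\le d_1\Delta u+u(1-u)$ and the resulting linear bound on the predator equation, together with standard parabolic regularity.

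For part (i), since no positive equilibrium exists I would try the entropy-type functional
\begin{equation*}
\mathcal V_1(t)=\int_\Omega\Bigl[(u-1-\ln u)+\tfrac{1}{d}\,v\Bigr]dx.
\end{equation*}
Differentiating along solutions and integrating by parts (Neumann conditions kill boundary terms), the diffusion contribution is the nonpositive term $-d_1\int_\Omega|\nabla u|^2/u^2$, while consolidating the reaction pieces yields an integrand of the form $-(u-1)^2$ plus $v$ multiplied by the bracket $\bigl[\tfrac{c-d(1+\alpha)}{d}u-\beta v(1+\alpha u)\bigr]/[(1+\alpha u)(1+\beta v)]$. The hypothesis \eqref{cond0} is precisely what forces this bracket to be $\le 0$, so $\mathcal V_1$ is a strict Lyapunov function and LaSalle's invariance principle identifies the $\omega$-limit set as $\{(1,0)\}$.

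For part (ii) I first invoke Theorem \ref{equ} together with Remark \ref{re1} to confirm that each of \eqref{cond1}--\eqref{cond3} forces the positive equilibrium $\tilde{\mathbf u}=(\tilde u,\tilde v)$ to be unique. The candidate functional is
\begin{equation*}
\mathcal V_2(t)=\int_\Omega\Bigl[c\int_{\tilde u}^u\frac{\phi_1(\xi)-\phi_1(\tilde u)}{\phi_1(\xi)}d\xi+\int_{\tilde v}^v\frac{\phi_2(\eta)-\phi_2(\tilde v)}{\phi_2(\eta)}d\eta\Bigr]dx.
\end{equation*}
The weights $c$ and $1$ are chosen precisely so that, after using the equilibrium identities $\psi_1(\tilde u)=\phi_2(\tilde v)$ and $\psi_2(\tilde v)=-c\phi_1(\tilde u)$ to split the reaction terms, the cross terms cancel, leaving (together with manifestly nonpositive diffusion terms produced by integration by parts) only the nonlinear residue
\begin{equation*}
c(\phi_1(u)-\phi_1(\tilde u))(\psi_1(u)-\psi_1(\tilde u))+(\phi_2(v)-\phi_2(\tilde v))(\psi_2(v)-\psi_2(\tilde v)).
\end{equation*}

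The main obstacle is showing this residue is pointwise $\le 0$. The $v$-term is automatic since $\phi_2$ is increasing and $\psi_2$ decreasing. For the $u$-term one needs $(u-\tilde u)(\psi_1(u)-\psi_1(\tilde u))\le 0$ for every $u\ge 0$, which is immediate under \eqref{cond1} because $\psi_1$ is then monotone, but fails in general once $\alpha>1$ since $\psi_1$ has an interior maximum at $(\alpha-1)/(2\alpha)$. The key claim is that under \eqref{cond2}--\eqref{cond3} the equilibrium abscissa satisfies $\tilde u\ge\check u:=(\alpha-1)/\alpha$: under \eqref{cond2} this is a consequence of $u_*\ge\check u$ from Lemma \ref{l2}(i), and under \eqref{cond3} of Remark \ref{re1}, which gives $C(u)>C(\check u)\ge c$ throughout $(0,\check u)$ so \eqref{uvs} admits no root there. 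Using $\psi_1(0)=\psi_1(\check u)=1$ and the downward-parabola shape of $\psi_1$, a short case analysis (splitting $u\in(0,\check u),\ [\check u,\tilde u],\ [\tilde u,1],\ (1,\infty)$) then delivers the desired sign. LaSalle's principle combined with uniform boundedness of orbits then yields the asserted global asymptotic stability.
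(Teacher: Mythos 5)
Your part (ii) is essentially the paper's own proof: the same weighted functional $c\int_{\tilde u}^u(\phi_1(\xi)-\phi_1(\tilde u))/\phi_1(\xi)\,d\xi+\int_{\tilde v}^v(\phi_2(\eta)-\phi_2(\tilde v))/\phi_2(\eta)\,d\eta$, the same cancellation of cross terms via the equilibrium identities, and the same key step of locating $\tilde u\ge(\alpha-1)/\alpha$ from Lemma \ref{l2} and Remark \ref{re1} so that $(u-\tilde u)(\psi_1(u)-\psi_1(\tilde u))\le 0$ on all of $(0,\infty)$; your case analysis using $\psi_1(0)=\psi_1((\alpha-1)/\alpha)=1$ checks out. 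For part (i) you diverge: the paper simply invokes the comparison principle (squeezing $u$ by the logistic equation and then driving $v\to 0$ since $cu/(1+\alpha u)\le c/(1+\alpha)<d$), whereas you build the entropy functional $\int_\Omega[(u-1-\ln u)+v/d]$; your computation of the bracket $[(c-d(1+\alpha))u/d-\beta v(1+\alpha u)]$ is correct and the condition $c<d(1+\alpha)$ does make it nonpositive, so this is a valid, if heavier, alternative. One small point worth tightening: LaSalle delivers global \emph{attractivity}; to claim global \emph{asymptotic stability} in part (ii) you should either observe that your functional is positive definite at $\tilde{\mathbf u}$ (hence also yields Lyapunov stability) or, as the paper does, append the linearized local stability from Theorem \ref{los} (which applies since $\tilde u\ge(\alpha-1)/\alpha$ forces $\psi_1'(\tilde u)<0$ and $C'(\tilde u)<0$).
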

\begin{proof}
Part $(i)$ can be easily deduced by the comparison principle. Therefore, we omit the proof of part $(i)$ and just prove part $(ii)$.
By virtue of Remark \ref{re1} and Theorem \ref{equ}, we see that if one of Eqs. \eqref{cond1}-\eqref{cond3} is satisfied, then system \eqref{CM} has a unique constant positive equilibrium $\tilde {\mathbf u}=(\tilde u,\tilde v)$.
Set
\begin{equation}
V(u(x,t),v(x,t))=c\int_\Omega\int_{\tilde u}^u \ds\f{\phi_1(\xi)-\phi_1(\tilde u)}{\phi_1(\xi)}dx+\int_\Omega\int_{\tilde v}^v \ds\f{\phi_2(\eta)-\phi_2(\tilde v)}{\phi_2(\eta)}dx,
\end{equation}
where $\phi_1$ and $\phi_2$ are defined as in \eqref{pspi}.
Then
\begin{equation*}
\begin{split}
V_t(u(x,t),v(x,t))=&c\int_\Omega\ds\f{(u-\tilde u)\left(\psi_1(u)-\psi_1(\tilde u)\right)}{(1+\alpha u)(1+\alpha \tilde u)}dx-d\beta \int_\Omega \ds\f{(v-\tilde v)^2}{(1+\beta v)(1+\beta \tilde v)}dx\\
-&d_1c\phi_1(\tilde u)\int_\Omega\ds\f{\phi'_1(u)}{[\phi_1(u)]^2}|\nabla u|^2dx-d_2\phi_2(\tilde v)\int_\Omega\ds\f{\phi'_2(v)}{[\phi_2(v)]^2}|\nabla v|^2dx,
\end{split}
\end{equation*}
where $\psi_1$ is also defined as in \eqref{pspi}. If Eq. \eqref{cond1} is satisfied, then $\psi'_1(u)<0$ for $u>0$, which implies that
\begin{equation}\label{pse}
(u-\tilde u)\left(\psi_1(u)-\psi_1(\tilde u)\right)\le 0\;\;\text{for any}\;\;u> 0,
\end{equation}
and equality holds if and only if $u=\tilde u$.
By Remark \ref{re1} and Theorem \ref{equ}, we obtain that if Eq. \eqref{cond2} or \eqref{cond3} is satisfied, then $\tilde u\ge\ds\f{\alpha-1}{\alpha}$. This also leads to \eqref{pse}, and equality holds if and only if $u=\tilde u$. Then $\tilde {\mathbf u}$ is globally attractive.
A direct computation yields $\psi'_1(\tilde u)<0$ and $C'(\tilde u)<0$, which implies that $\tilde {\mathbf u}$ is locally asymptotically stable from Theorem \ref{los}. Therefore, $\tilde {\mathbf u}$ is globally asymptotically stable.
\end{proof}

\section{Stationary solutions}
In this section, we will investigate the steady states of system \eqref{CM}, which satisfy
\begin{equation}\label{stCM}
\begin{cases}
-d_1\Delta u=u\left(1-u\right)-\ds\f{uv}{(1+\alpha u)(1+\beta v)}, & x\in \Omega,\\
-d_2\Delta v=-dv+\ds\f{cuv}{(1+\alpha u)(1+\beta v)}, & x\in\Omega,\\
 \partial_\nu  u=\partial_\nu
  v=0,& x\in \partial \Omega,\\
\end{cases}
\end{equation}
and establish results on the existence and nonexistence of nonconstant positive steady states.
From above Theorem \ref{glo}, we see that if one of Eqs. \eqref{cond0}-\eqref{cond3} is satisfied, then all the solutions of system \eqref{CM}, regardless of the initial data,
converge to a constant steady state as time goes to infinity.
Therefore, we only need to consider the case that \begin{equation}\label{cas}
\alpha>1,\;\;\beta<1\;\;\text{and}\;\; c>C\left(\ds\f{\alpha-1}{\alpha}\right).\end{equation}
Throughout this section, we always assume that $\alpha>1$ and $\beta<1$ unless otherwise specified.
\subsection{The nonexistence}
In this subsection, we mainly study positive steady states of system \eqref{CM} when $c$ is large.
Suppose that $(u,v)$ satisfies Eq. \eqref{stCM}.
Let
$w=cu$, $z=v/c$ and $\rho=1/c$. Then $(w,v)$ satisfies
\begin{equation}\label{wv}
\begin{cases}
-d_1\Delta w=w\left(1-\rho w\right)-\ds\f{wv}{(1+\alpha \rho w)(1+\beta v)}, & x\in \Omega,\\
-d_2\Delta v=-dv+\ds\f{wv}{(1+\alpha \rho w )(1+\beta v)}, & x\in\Omega,\\
 \partial_\nu  w=\partial_\nu
  v=0,& x\in \partial \Omega,\\
\end{cases}
\end{equation}
and $(u,z)$ satisfies
\begin{equation}\label{uz}
\begin{cases}
-d_1\Delta u=u\left(1-u\right)-\ds\f{uz}{(1+u)(\rho+\beta z)}, & x\in \Omega,\\
-d_2\Delta z=-dz+\ds\f{uz}{(1+u)(\rho+\beta z)}, & x\in\Omega,\\
 \partial_\nu  u=\partial_\nu
  z=0,& x\in \partial \Omega.\\
\end{cases}
\end{equation}
Therefore, the existence/nonexistence of positive solutions of system \eqref{stCM} for large $c$ is equivalent to that of system \eqref{wv} or \eqref{uz} for small $\rho$.
The method used here is motivated by \cite{Peng-Shi}.
For later applications, we cite the following three well-known results. The first is from  \cite{Lieberman,Peng-Shi}.

\begin{lemma}\label{cc1}
Assume that $\Omega$ is a bounded Lipschitz domain in $\mathbb{R}^N$, $d$ is a nonnegative constant, and
$z \in W^{1,2}(\Omega)$ is a non-negative weak solution of the following inequalities
\begin{equation*}
\begin{cases}
  -\Delta z +d z\ge0, & x\in \Omega,\\
 \partial_\nu  z\le0,& x\in \partial \Omega.\\
\end{cases}
\end{equation*}
Then, there is a positive constant $C$, which is determined only by $d$ and $\Omega$, such that
$$\int_\Omega z dx \le C\inf_{x\in\Omega} z.$$
\end{lemma}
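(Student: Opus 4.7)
The plan is to reduce this global Harnack-type bound to the classical interior weak Harnack inequality of De Giorgi--Moser--Nash after eliminating the role of the Neumann boundary via an even reflection argument. Once a local Harnack estimate is available up to the boundary, a standard chaining argument over a finite covering of $\bar\Omega$ yields $\sup_\Omega z \leq C\inf_\Omega z$, which integrates immediately to the claim.

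First I would treat points near $\partial\Omega$: fix $x_0\in\partial\Omega$ and use the Lipschitz regularity of $\Omega$ to introduce a bilipschitz flattening map $\Phi$ sending a neighborhood of $x_0$ in $\bar\Omega$ to a half-ball $B_r^+$. In the new variables the inequality becomes
\[
  -\mathrm{div}\bigl(A(y)\nabla \tilde z\bigr) + d\,\tilde z \ge 0 \quad \text{in } B_r^+,
\]
with $A\in L^\infty$ uniformly elliptic, together with a conormal inequality $A\nabla\tilde z\cdot e_N \leq 0$ on $\{y_N=0\}$. Defining the even reflection $\hat z(y',y_N):=\tilde z(y',|y_N|)$ produces a nonnegative weak supersolution on the full ball $B_r$ of a divergence-form elliptic equation with reflected (still bounded measurable, uniformly elliptic) coefficients; the conormal inequality is exactly the ingredient that prevents an unwanted negative singular distribution from appearing across $\{y_N=0\}$. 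Applying the interior weak Harnack inequality to $\hat z$ on $B_{r/2}$ and transferring back by $\Phi^{-1}$ yields a local estimate of the form $\int_{U\cap\Omega} z\,dx \leq C\inf_{U\cap\Omega} z$ for some neighborhood $U$ of $x_0$. Interior points are handled directly by the same weak Harnack inequality without reflection.

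Finally, by compactness of $\bar\Omega$ I would cover it by finitely many such neighborhoods and chain the local estimates along a connected path joining any two points, producing $\sup_\Omega z \leq C_\Omega \inf_\Omega z$; integrating in $x$ gives the stated bound with $C=C_\Omega |\Omega|$. The main obstacle is the reflection step at a merely Lipschitz boundary: after flattening, the coefficients of the elliptic operator lie only in $L^\infty$, and one has to verify the distributional supersolution property of $\hat z$ across the plane $\{y_N=0\}$ carefully, checking that the conormal inequality cancels the boundary term in the weak formulation. For $C^{1}$ or smoother boundaries this is essentially automatic; the low-regularity version is the technical content of the Lieberman-type estimates cited here, which is why the lemma is quoted rather than reproved.
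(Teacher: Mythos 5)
The paper does not actually prove this lemma: it is quoted as a known result from Lieberman and from Peng--Shi, so there is no in-paper argument to compare against, only the literature it points to. Your overall strategy --- flatten the Lipschitz boundary, reflect evenly across $\{y_N=0\}$ (using the conormal inequality to check that the reflected function is still a weak supersolution of a uniformly elliptic divergence-form equation), invoke the interior weak Harnack inequality, and glue local estimates over a finite cover of $\overline\Omega$ --- is indeed the standard route to such boundary weak Harnack estimates and is consistent in spirit with the cited sources.

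There is, however, one genuine error in how you close the argument. You claim that chaining the local estimates produces the full Harnack inequality $\sup_\Omega z\le C\inf_\Omega z$, and you then integrate this. For a nonnegative $W^{1,2}$ \emph{supersolution} only the weak Harnack inequality holds (an $L^p$ average controlled by the infimum); such a function need not be bounded, so no bound of the form $\sup\le C\inf$ is available. Concretely, $z(x)=|x|^{-1/4}$ on the unit ball of $\mathbb{R}^3$ is nonnegative, superharmonic, lies in $W^{1,2}$, satisfies $\partial_\nu z\le 0$ on the boundary, and is unbounded while its infimum equals $1$; this is exactly why the lemma is stated with $\int_\Omega z\,dx$ on the left rather than $\sup_\Omega z$. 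The repair is immediate and you essentially already have the ingredients: your local estimates are correctly stated in the $L^1$ form $\int_{U_i\cap\Omega}z\,dx\le C_i\inf_{U_i\cap\Omega}z$, and on an overlap $U_i\cap U_j$ of positive measure one has $\inf_{U_i\cap\Omega}z\le |U_i\cap U_j\cap\Omega|^{-1}\int_{U_j\cap\Omega}z\le C_j\,|U_i\cap U_j\cap\Omega|^{-1}\inf_{U_j\cap\Omega}z$, so by connectedness of $\Omega$ all local infima are comparable to $\inf_\Omega z$ and summing the local $L^1$ bounds over the finite cover yields $\int_\Omega z\,dx\le C\inf_\Omega z$ directly. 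With the chaining carried out at the level of the $L^1$ estimates and the spurious sup bound deleted, the argument is sound, modulo the careful verification of the reflected weak formulation at a merely Lipschitz boundary, which you rightly identify as the technical content of the cited results.
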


Then, we cite a Harnack inequality from \cite{Lin-Ni,Peng-Shi-Wang2}.
\begin{lemma}\label{harnack}
Assume that $\Omega$ is a bounded Lipschitz domain in $\mathbb{R}^N$, $c(x)\in L^q(\Omega)$ for some $q > N/2$, and
$z \in W^{1,2}(\Omega)$ is a non-negative weak solution of the following problem
\begin{equation*}
\begin{cases}
  \Delta z +c(x) z=0, & x\in \Omega,\\
 \partial_\nu  z=0,& x\in \partial \Omega.\\
\end{cases}
\end{equation*}
Then, there is a positive constant $C$, which is determined only by $\|c(x)\|_q$, $q$, and $\Omega$, such that
$$\sup_{x\in\Omega} z\le C\inf_{x\in\Omega} z.$$
\end{lemma}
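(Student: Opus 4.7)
The plan is to prove the Harnack inequality by the classical Moser iteration method, adapted to the Neumann setting. Because the boundary condition is $\partial_\nu z = 0$ and $\Omega$ is a bounded Lipschitz domain, the usual trick is either to reflect across the boundary in local coordinates to reduce to an interior problem, or to work directly with cut-off functions that do not vanish on $\partial\Omega$ and use the Neumann condition to discard the boundary integral arising from integration by parts. Either way, the weak formulation takes the form
\[
\int_\Omega \nabla z \cdot \nabla \varphi \, dx = \int_\Omega c(x) z \varphi\, dx \quad \text{for all } \varphi \in W^{1,2}(\Omega),
\]
and the iteration can proceed locally.

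First, I would establish the upper bound. Fix a ball $B_{2r}$, a smooth cutoff $\eta$ supported in $B_{2r}$ with $\eta\equiv 1$ on $B_r$, and test with $\varphi = \eta^2 z^{2\beta-1}$ for $\beta \ge 1$ (after truncation to handle possible non-integrability). Cauchy--Schwarz plus the Sobolev embedding $W^{1,2}\hookrightarrow L^{2N/(N-2)}$ produces a reverse-H\"older inequality
\[
\|z\|_{L^{2\beta\chi}(B_r)} \le \bigl(C\beta\bigr)^{1/\beta} \|z\|_{L^{2\beta}(B_{2r})},\qquad \chi = \tfrac{N}{N-2},
\]
where the constant $C$ depends on $\|c\|_{L^q}$, $q$, and the geometry, using $q>N/2$ so that the lower-order term is controlled by the Sobolev term via H\"older. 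Iterating on $\beta_k = \chi^k$ and summing a geometric series gives $\sup_{B_r} z \le C\|z\|_{L^2(B_{2r})}$, and an interpolation argument reduces the $L^2$ norm on the right to any $L^p$ norm with $p>0$.

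Next, for the lower bound I would apply the same iteration to the function $\tilde z = z+\delta$ with test function $\eta^2 \tilde z^{-2\beta-1}$; after letting $\delta \to 0^+$ one obtains
\[
\inf_{B_r} z \ge c_0 \|z\|_{L^{-p}(B_{2r})}
\]
for some $p>0$. The essential step bridging the positive and negative $L^p$ norms is the John--Nirenberg inequality applied to $\log z$: a Poincar\'e-type estimate in the weak form, testing with $\varphi = \eta^2/z$, shows that $\log z$ has bounded mean oscillation, whence $\|z\|_{L^{p_0}}\|z^{-1}\|_{L^{p_0}}$ is bounded on balls. Combining with the previous two estimates yields $\sup_{B_r} z \le C \inf_{B_r} z$, and a standard chaining argument over a finite cover of $\overline{\Omega}$ (finite by compactness) upgrades this to a global estimate with constant depending only on $\|c\|_{L^q}$, $q$, and $\Omega$.

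The main obstacle is carrying out the iteration near $\partial\Omega$: one must either straighten the boundary via local charts (permissible since $\partial\Omega$ is Lipschitz) and then use an even reflection, or verify that the Neumann condition $\partial_\nu z=0$ allows the cutoff to be chosen without forcing vanishing at the boundary, so that no boundary term appears in the integration by parts. A secondary subtlety is ensuring that the constant in the Sobolev embedding after reflection still depends only on $\Omega$; this requires the extension operator bound for Lipschitz domains. Once these technicalities are handled, the Moser scheme runs uniformly up to the boundary and yields the claimed inequality.
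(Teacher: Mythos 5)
The paper does not prove this lemma at all: it is quoted verbatim as a known Harnack inequality from the cited references (Lin--Ni--Takagi and Peng--Shi--Wang), so there is no ``paper proof'' to compare against. Your Moser-iteration outline is the standard proof of exactly this statement and is essentially what those references rely on: the reverse H\"older inequality from testing with $\eta^2 z^{2\beta-1}$, the negative-power iteration for the infimum, the John--Nirenberg/BMO bridge via $\log z$, and a chaining argument over a finite cover of $\overline\Omega$. The structure is sound, and you correctly identify the two genuine technical points for the Neumann problem: that test functions need not vanish on $\partial\Omega$ (so the boundary term in the weak formulation is absent by definition of weak solution), and that near $\partial\Omega$ one works on $B_{2r}\cap\Omega$ with Sobolev/extension constants controlled by the Lipschitz character of the domain. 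Two small caveats if you were to write this out in full: the Sobolev exponent $2N/(N-2)$ must be replaced by an arbitrary finite exponent when $N\le 2$, and the conclusion for a $W^{1,2}$ weak solution is properly an essential sup/inf bound (which suffices here, since $c\in L^q$ with $q>N/2$ gives local H\"older continuity by De Giorgi--Nash--Moser, so the two notions coincide). As a proof proposal for a classical cited result, this is an accurate reconstruction rather than a new route.
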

Finally, we cite a maximum principle from \cite{Lou-Ni}.
\begin{lemma}\label{maxin}
Assume that $\Omega$ is a bounded smooth domain in $\mathbb{R}^N$, $g\in C(\overline\Omega\times \mathbb R)$, and $z\in C^{2}(\Omega)\cap C^1(\overline\Omega)$
satisfies the following inequalities
\begin{equation*}
\begin{cases}
  \Delta z+g(x,z)\ge0, & x\in \Omega,\\
 \partial_\nu  z\le0,& x\in \partial \Omega.\\
\end{cases}
\end{equation*}
If $z(x_0)=\max_{x\in\overline \Omega}z$, then $g(x_0,z(x_0))\ge0$.
\end{lemma}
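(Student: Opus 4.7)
The plan is to argue by contradiction. Suppose $g(x_0,z(x_0))<0$. By continuity of $g$ on $\overline{\Omega}\times\mathbb{R}$ and of $z$ on $\overline{\Omega}$, there is an open neighborhood $U\subset\overline{\Omega}$ of $x_0$ such that $g(x,z(x))<0$ for every $x\in U$. Set $V:=U\cap\Omega$. Then the assumed differential inequality gives
\[
\Delta z(x)\ \ge\ -g(x,z(x))\ >\ 0\qquad\text{for all }x\in V,
\]
so $z$ is strictly subharmonic on $V$, while still attaining its global maximum at $x_0$. I will now split on whether $x_0$ is an interior point or a boundary point.

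If $x_0\in\Omega$, then $x_0$ is an interior local maximum of $z\in C^2(\Omega)$. The second-derivative test forces the Hessian $D^2z(x_0)$ to be negative semidefinite and hence $\Delta z(x_0)\le 0$, which directly contradicts $\Delta z(x_0)>0$. If instead $x_0\in\partial\Omega$, I invoke Hopf's boundary point lemma: since $\Omega$ is smooth, $\partial\Omega$ satisfies an interior sphere condition at $x_0$, and on the corresponding interior ball $B\subset V$ one has $\Delta z>0$ with $z\le z(x_0)$ and $z(x_0)$ attained at the boundary point $x_0$. Hopf's lemma then yields $\partial_\nu z(x_0)>0$, contradicting the hypothesis $\partial_\nu z(x_0)\le 0$. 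Either way the assumption $g(x_0,z(x_0))<0$ is untenable, so $g(x_0,z(x_0))\ge 0$.

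The main subtlety is the boundary case: the global maximum need not be strict, so one must use the version of Hopf's lemma that only requires $z(x_0)\ge z(x)$ locally (together with strict subharmonicity in $V$), rather than a strict maximum. If one preferred to avoid this, a standard workaround is to replace $z$ by the perturbation $z_\varepsilon(x):=z(x)-\varepsilon|x-x_0|^2$, which attains a strict maximum near $x_0$ and still satisfies $\Delta z_\varepsilon>0$ on a slightly smaller neighborhood for $\varepsilon>0$ small, then run the interior-sphere/Hopf argument for $z_\varepsilon$ and pass to $\varepsilon\to 0^+$. The smoothness of $\partial\Omega$ is what makes the interior sphere condition automatic; for merely Lipschitz domains a different boundary argument would be needed.
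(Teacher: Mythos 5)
Your argument is correct. Note that the paper does not prove this lemma at all—it is cited verbatim from Lou and Ni's work on diffusion and cross-diffusion—so there is no in-paper proof to compare against; your contradiction argument (interior case via the second-derivative test, boundary case via the interior sphere condition and Hopf's boundary point lemma) is the standard proof of exactly this statement. One small simplification: the ``subtlety'' you flag at the end is not actually an issue, since strict subharmonicity $\Delta z>0$ on $V$ already forbids the maximum from being attained at any interior point of $V$ (by your own interior-case argument), so $z(x)<z(x_0)$ holds automatically on the tangent interior ball and the classical form of Hopf's lemma applies directly, with no need for the $\varepsilon$-perturbation $z_\varepsilon$.
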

It follows from Theorem \ref{equ} that if
\begin{equation}\label{ca2}
\alpha>1\;\;\text{and}\;\; 0<\beta<\ds\f{4\alpha}{(\alpha+1)^2},\;\;\text{(case I)}\end{equation} then system \eqref{stCM} has only one constant positive solution for sufficiently large $c$, and if
\begin{equation}\label{ca1}
\alpha>1\;\;\text{and}\;\;\ds\f{4\alpha}{(\alpha+1)^2}<\beta<1,\;\;\text{(case II)}
\end{equation}
 then system \eqref{stCM} has
three constant positive solutions for sufficiently large $c$. Therefore, the following discussion is divided into two cases.
By using Lemmas \ref{cc1}-\ref{maxin}, we first give two results on a \textit{priori} estimates for positive solutions of system \eqref{stCM}.
\begin{lemma}\label{asy}
Let $(u_{i}(x),v_{i}(x))$ be a positive solution of system \eqref{stCM} for $c=c_i$, where $i=1,2,\cdots$, and $\lim_{i\to\infty}c_i=\infty$. Assume that one of the following assumptions is satisfied:
\begin{enumerate}
\item [(i)]
$\alpha$ and $\beta$ satisfy Eq. \eqref{ca2}.
\item [(ii)] $\alpha$ and $\beta$ satisfy Eq. \eqref{ca1}, and $u_{i}(x)\to0$ in $C(\overline \Omega)$ as $i\to \infty$.
\end{enumerate}
Then, there exists a subsequence $\{i_k\}_{k=1}^\infty$ such that
$(c_{i_k}u_{i_k}(x),v_{i_k}(x))\to (\tilde w(x),\tilde v(x))$ in $C^2(\overline \Omega)$ as $k\to \infty$, where $(\tilde w(x), \tilde v(x))$ is a positive solution of system \eqref{wv} for $\rho=0$.
\end{lemma}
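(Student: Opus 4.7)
The plan is to derive a priori $L^\infty$-bounds on $(w_i, v_i) := (c_i u_i, v_i)$ independent of $i$, upgrade them to uniform $C^{2,\alpha}$-bounds via standard elliptic regularity, extract a $C^2$-convergent subsequence by Arzelà--Ascoli, and finally verify that the limit is positive. Since $\partial_\nu w_i = c_i\,\partial_\nu u_i = 0$, the pair $(w_i, v_i)$ solves system \eqref{wv} with $\rho_i = 1/c_i$, and Lemma \ref{maxin} applied to the $u_i$-equation in \eqref{stCM} yields $u_i \leq 1$, i.e.\ $\rho_i w_i \leq 1$, uniformly in $i$.

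For the uniform $L^\infty$-bound, recast \eqref{wv} in the form $\Delta w_i + A_i(x) w_i = 0$ and $\Delta v_i + B_i(x) v_i = 0$, where
\begin{equation*}
A_i = d_1^{-1}\Bigl[(1-\rho_i w_i) - \tfrac{v_i}{(1+\alpha \rho_i w_i)(1+\beta v_i)}\Bigr], \qquad B_i = d_2^{-1}\Bigl[-d + \tfrac{w_i}{(1+\alpha \rho_i w_i)(1+\beta v_i)}\Bigr].
\end{equation*}
Since $\rho_i w_i \leq 1$, $\|A_i\|_\infty$ is uniformly bounded and Lemma \ref{harnack} gives $\sup_\Omega w_i \leq C_H \inf_\Omega w_i$ with $C_H$ independent of $i$. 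Integration of the two equations yields the identity $\int_\Omega w_i(1-\rho_i w_i)\,dx = d\int_\Omega v_i\,dx$. To bound $\|w_i\|_\infty$ uniformly I would argue by contradiction: if $M_i := \|w_i\|_\infty \to \infty$, rescale $\hat w_i := w_i/M_i$, extract a $C^2$-limit $\hat w_\infty$ with $\|\hat w_\infty\|_\infty = 1$, and derive an inconsistency between the limiting equation for $\hat w_\infty$ and the Neumann spectrum of $-\Delta$. Under assumption (ii) the hypothesis $u_i \to 0$ uniformly simplifies the limiting coefficient, while under assumption (i) the same contradiction runs without an external smallness hypothesis on $u_i$. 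Once $\|w_i\|_\infty \leq C_1$ is established, $B_i$ becomes uniformly $L^\infty$, Harnack applies to $v_i$, and the integral identity together with Harnack bounds $\|v_i\|_\infty$ similarly.

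Given uniform $L^\infty$-bounds on $(w_i, v_i)$, $L^p$ and Schauder estimates applied to \eqref{wv} produce uniform $C^{2,\alpha}(\overline\Omega)$ bounds, and Arzelà--Ascoli extracts a subsequence $(w_{i_k}, v_{i_k}) \to (\tilde w, \tilde v)$ in $C^2(\overline\Omega)$. Since $\rho_{i_k} \to 0$ and the nonlinearities are continuous, passage to the limit shows $(\tilde w, \tilde v)$ solves \eqref{wv} with $\rho = 0$.

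It remains to show $(\tilde w, \tilde v)$ is positive. Suppose $\tilde w \equiv 0$; then Harnack forces $\sup_\Omega w_{i_k} \to 0$, and the $v_{i_k}$-equation together with Lemma \ref{cc1} then forces $\sup_\Omega v_{i_k} \to 0$. Renormalising $\hat w_{i_k} := w_{i_k}/\|w_{i_k}\|_\infty$ and passing to a $C^2$-limit $\hat w_\infty$, one obtains $\hat w_\infty \geq 0$ with $\|\hat w_\infty\|_\infty = 1$ satisfying $-d_1\Delta \hat w_\infty = \hat w_\infty$ under Neumann boundary conditions (the coefficient $A_{i_k}$ tends to $1/d_1$). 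Integrating and using the Neumann data gives $\int_\Omega \hat w_\infty\,dx = 0$, which together with $\hat w_\infty \geq 0$ forces $\hat w_\infty \equiv 0$, contradicting $\|\hat w_\infty\|_\infty = 1$. A symmetric argument rules out $\tilde v \equiv 0$. The main obstacle throughout is the second step: carrying out the rescaling/contradiction argument for the uniform $L^\infty$-bound on $w_i$ --- particularly in case (i), where $u_i \to 0$ is not available as a hypothesis --- following the method of Peng--Shi cited by the authors requires delicate bookkeeping to handle the degeneracy $\rho_i \to 0$ while preserving compatibility with the Neumann data.
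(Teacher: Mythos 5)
Your overall skeleton (uniform bounds for $(w_i,v_i)=(c_iu_i,v_i)$, Harnack, $L^p$ plus Schauder estimates, Arzel\`a--Ascoli, passage to the limit, positivity of the limit) matches the paper's, and several pieces are sound: the Harnack inequality does apply to the $w_i$-equation with a constant independent of $i$ because $v/(1+\beta v)\le 1/\beta$, and your exclusion of $\tilde w\equiv 0$ can be made to work (indeed more simply: if $w_{i_k}\to 0$ uniformly, integrating the $v_{i_k}$-equation over $\Omega$ with the Neumann condition gives $0=\int_\Omega v_{i_k}\bigl[-d+\frac{w_{i_k}}{(1+\alpha u_{i_k})(1+\beta v_{i_k})}\bigr]dx<0$ for large $k$, a contradiction, with no renormalization needed).

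The genuine gap is the uniform upper bound, which you explicitly leave as ``the main obstacle'' and whose mechanism you misidentify. The contradiction when $\|w_i\|_\infty\to\infty$ (equivalently, in the paper's ordering, when $\int_\Omega v_i\,dx\to\infty$) has nothing to do with the Neumann spectrum of $-\Delta$; it is a sign condition coming precisely from the hypotheses you never invoke. Writing the first equation as $-d_1\Delta w_i=\frac{w_i}{1+\alpha u_i}\bigl[(1-u_i)(1+\alpha u_i)-\frac{v_i}{1+\beta v_i}\bigr]$ and noting $\max_{0\le u\le 1}(1-u)(1+\alpha u)=\frac{(1+\alpha)^2}{4\alpha}$ for $\alpha>1$, one sees that if $v_i\to\infty$ uniformly (which Lemma \ref{cc1} yields from $\int_\Omega v_i\,dx\to\infty$), the bracket is eventually $\le\frac12\bigl[\frac{(1+\alpha)^2}{4\alpha}-\frac1\beta\bigr]<0$ under assumption (i), and $\le\frac12\bigl[1-\frac1\beta\bigr]<0$ under assumption (ii) using $u_i\to0$ and $\beta<1$; integrating over $\Omega$ then contradicts $w_i>0$. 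This is exactly where $\beta<\frac{4\alpha}{(\alpha+1)^2}$, respectively $u_i\to0$, enters, and without it the statement fails (in case II with $u_i\not\to0$ the correct scaling is that of Lemma \ref{asy2}, not this one). The paper then closes the loop in the opposite order from yours: the bound on $\int_\Omega v_i\,dx$ plus the integrated $v$-equation bounds $\inf w_i$, Harnack bounds $\sup w_i$, and only then is the coefficient of the $v$-equation uniformly bounded so that Harnack applies to $v_i$ and, together with the integral bound, controls $\sup v_i$. Your proposed order (bound $\sup w_i$ first by blow-up) can be completed, but only by running essentially this same computation inside the blow-up; as written, the decisive step is absent.
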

\begin{proof}
First, we derive the existence of the upper bounds for $\{c_iu_i\}$ and $\{v_i\}$.
Let $w_i=c_iu_i$ and $\rho_i=1/c_i$. Then $(w_i,v_i)$ satisfies
\begin{equation}\label{stCMu}
\begin{cases}
-d_1\Delta w_i=w_i\left(1-\rho_i w_i\right)-\ds\f{w_iv_i}{(1+\alpha \rho_i w_i)(1+\beta v_i)}, & x\in \Omega,\\
-d_2\Delta v_i=-dv_i+\ds\f{w_iv_i}{(1+\alpha \rho_i w_i )(1+\beta v_i)}, & x\in\Omega,\\
 \partial_\nu  w_i=\partial_\nu
  v_i=0,& x\in \partial \Omega.\\
\end{cases}
\end{equation}
Thanks to Lemma \ref{cc1}, there is a positive constant $C_0$ such that
\begin{equation}\label{c0}
\int_\Omega v_{i} dx \le C_0\inf_{x\in\Omega} v_i\;\;\text{for all}\;\; i\ge1.
\end{equation}
We claim that there exists $C_1>0$ such that
\begin{equation}\label{c1}
 \int_\Omega v_{i} dx \le C_1\;\;\text{for all}\;\; i\ge1.
\end{equation}
Suppose that it is not true. Then there exists a subsequence $\{i_n\}_{n=1}^\infty$ such that $\lim_{n\to\infty}i_n=\infty$ and
$\lim_{n\to\infty}\int_\Omega v_{i_n} dx=\infty$, which implies that $v_{i_n}\to\infty$ uniformly on $\overline \Omega$ as $n\to\infty$ from Eq. \eqref{c0}.
By virtue of Lemma \ref{maxin}, we have
\begin{equation*}
\sup_{x\in\Omega}u_{i}\le1\;\;\text{for all}\;\;i\ge1.
\end{equation*}
If assumption $(i)$ is satisfied, then, for sufficiently large $n$,
\begin{equation*}
\begin{split}
-d_1\Delta w_{i_n}=& \ds\f{w_{i_n}}{1+\alpha u_{i_n}}\left[(1- u_{i_n})(1+\alpha u_{i_n})-\ds\f{v_{i_n}}{1+\beta v_{i_n}}\right]\\
\le&\ds\f{1}{2}\left[\ds\f{(\alpha+1)^2}{4\alpha}-\ds\f{1}{\beta}\right]\ds\f{w_{i_n}}{1+\alpha},
\end{split}
\end{equation*}
which implies that $w_{i_n}\le0$ for sufficiently large $n$. If assumption $(ii)$ is satisfied, then, for sufficiently large $n$,
\begin{equation*}
\begin{split}
-d_1\Delta w_{i_n}=& \ds\f{w_{i_n}}{1+\alpha u_{i_n}}\left[(1- u_{i_n})(1+\alpha u_{i_n})-\ds\f{v_{i_n}}{1+\beta v_{i_n}}\right]\\
\le&\ds\f{1}{2}\left(1-\ds\f{1}{\beta}\right)w_{i_n},
\end{split}
\end{equation*}
which also leads to $w_{i_n}\le0$ for sufficiently large $n$. Therefore, the contradiction is arrived for both cases, and Eq. \eqref{c1} holds. By the second equation of \eqref{stCMu}, we have
\begin{equation}\label{inw}
\ds\f{|\Omega|}{1+\alpha}\inf_{x\in\Omega} w_i\le\int_{\Omega}\ds\f{ w_i}{1+\alpha u_i}dx\le d\int_{\Omega}(1+\beta v_i)dx\;\;\text{for all}\;\; i\ge1.
\end{equation}
Because $$\left\|1-u_i-\ds\f{v_i}{(1+\alpha u_i)(1+\beta v_i)}\right\|_\infty\le 2+\ds\f{1}{\beta}\;\;\text{for all}\;\;i\ge1,$$
by Lemma \ref{harnack}, we see that there exists a positive constant $C_2$ such that
\begin{equation}\label{c2}
\sup_{x\in\Omega}w_{i}\le C_2\inf_{x\in\Omega} w_{i}\;\;\text{for all}\;\;i\ge1.
\end{equation}
It follows from Eqs. \eqref{c1}-\eqref{c2} that there exists a positive constant $C_3$ such that
\begin{equation}\label{c3}
\sup_{x\in\Omega}w_{i}\le C_3\;\;\text{for all}\;\;i\ge1,
\end{equation}
which yields
$$\left\|-d+\ds\f{w_i}{(1+\alpha u_i)(1+\beta v_i)}\right\|_\infty\le d+C_3\;\;\text{for all}\;\;i\ge1.$$
Again, from Lemma \ref{harnack}, we obtain that there exists a positive constant $C_4$ such that
\begin{equation}\label{c4}
\sup_{x\in\Omega}v_{i}\le C_4\inf_{x\in\Omega} v_{i}\;\;\text{for all}\;\;i\ge1.
\end{equation}
By virtue of Eqs. \eqref{c1} and \eqref{c4}, we see that there exists a positive constant $C_5$ such that
\begin{equation}\label{c5}
\sup_{x\in\Omega}v_{i}\le C_5\;\;\text{for all}\;\;i\ge1.
\end{equation}

Then, we find the lower bounds for $\{w_{i}\}$ and $\{v_{i}\}$. We first claim that there exists a positive constant $C_6$ such that
\begin{equation}\label{c6}
\inf_{x\in\Omega} w_i \ge C_6\;\;\text{for all}\;\;i\ge1.
\end{equation}
Suppose Eq. \eqref{c6} does not holds. Then there exists a subsequence $\{i_m\}_{m=1}^\infty$ such that $\lim_{m\to\infty}i_m=\infty$ and
$\lim_{m\to\infty} \inf_{x\in\Omega} w_{i_m}=0$. By Eq. \eqref{c2}, we have $w_{i_{m}}\to0$ uniformly on $\overline \Omega$ as $ m\to \infty$. Hence, for sufficiently large $m$,
$$\int_{\Omega}v_{i_m}\left[d-\ds\f{w_{i_m}}{(1+\alpha u_{i_m})(1+\beta v_{i_m})}\right]dx>0,$$
which is a contradiction. Hence Eq. \eqref{c6} holds. Then, we claim that there exists a positive constant $C_7$ such that
\begin{equation}\label{c7}
\inf_{x\in\Omega} v_i \ge C_7\;\;\text{for all}\;\;i\ge1.
\end{equation}
Suppose that Eq. \eqref{c7} does not hold. Then, there exists a subsequence $\{i_j\}_{j=1}^\infty$ such that $\lim_{j\to\infty}i_j=\infty$ and
$\lim_{j\to\infty} \inf_{x\in\Omega} v_{i_j}=0$, which leads to $v_{i_{j}}\to0$ uniformly on $\overline \Omega$ as $ j\to \infty$ from Eq. \eqref{c4}. Noticing that $\{w_i\}$ is bounded,  we have, for sufficiently large $j$,
$$\int_{\Omega}w_{i_j}\left[1-\rho_{i_j}w_{i_j}+\ds\f{v_{i_j}}{(1+\alpha u_{i_j})(1+\beta v_{i_j})}\right]dx>0,$$ which is a contradiction. Therefore, Eq. \eqref{c7} holds.

Finally, we give the asymptotic behavior of $\{w_i\}$ and $\{v_i\}$. From above analysis, we see that both $\{w_i\}$ and $\{v_i\}$ are bounded. Then, due to the $L^p$ theory, we obtain that $\{w_i\}$ and $\{v_i\}$ are bounded in $W^{2,p}(\Omega)$ for any $p>N$. It follows from the embedding theorem that $\{w_i\}$ and $\{v_i\}$ are precompact in $C^1(\overline\Omega)$.
Then, there exists a subsequence $\{i_k\}_{k=1}^{\infty}$  and $(\tilde w(x),\tilde v(x))\in C^1(\overline \Omega)\times C^1(\overline \Omega)$ such that
$$(cu_{i_k},v_{i_k})=(w_{i_k},v_{i_k})\to (\tilde w(x),\tilde v(x))\;\;\text{in}\;\;C^1(\overline \Omega)\times C^1(\overline \Omega)\text{ as }k\to\infty,$$
where $\tilde w(x)$ and $\tilde z(x)$ are positive from Eqs. \eqref{c6} and \eqref{c7}.
Note that
\begin{equation}\label{abv}
\begin{split}
w_{i_k}= &[-d_1\Delta+I]^{-1}\left[ w_{i_k}+w_{i_k}\left(1-\rho_{i_k} w_{i_k}-\ds\f{z_{i_k}}{(1+\rho_{i_k}w_{i_k})(1+\beta v_{i_k})}\right)\right],\\
v_{i_k}=&[-d_2\Delta+I]^{-1}\left[ v_{i_k}+v_{i_k}\left(-d+ \ds\f{w_{i_k}}{(1+\rho_{i_k}w_{i_k})(1+\beta v_{i_k})}\right)\right],
\end{split}
\end{equation}
and $\lim_{k\to\infty}\rho_{i_k}w_{i_k}=0$ in $C^1(\overline \Omega)$.
Then, taking the limit of Eq. \eqref{abv}
as $k\to\infty$ and by the Schauder theorem, we see that
$(\tilde w(x),\tilde v(x))$ is a positive solution of system \eqref{wv} for $\rho=0$, and
$$(cu_{i_k},v_{i_k})=(w_{i_k},v_{i_k})\to (\tilde w(x),\tilde v(x))\;\;\text{in}\;\;C^2(\overline \Omega)\times C^2(\overline \Omega)\text{ as }k\to\infty,$$
The proof is complete.
\end{proof}

\begin{lemma}\label{asy2}
Let $(u_{i}(x),v_{i}(x))$ be a positive solution of system \eqref{stCM} for $c=c_i$, where $i=1,2,\cdots$, and $\lim_{i\to\infty}c_i=\infty$. Assume that $\alpha$ and $\beta$ satisfy Eq. \eqref{ca1}, and $u_{i}(x)\to \tilde u(x)$ in $C(\overline \Omega)$ as $i\to \infty$, where $\tilde u(x)>0$ for $x\in\overline \Omega$.
Then there exists a subsequence $\{i_k\}_{k=1}^\infty$ such that
$(u_{i_k}(x),v_{i_k}(x)/c_{i_k})\to (\tilde u(x),\tilde z(x))$ in $C^2(\overline \Omega)$ as $k\to \infty$, where $(\tilde u(x), \tilde z(x))$ is a positive solution of system \eqref{uz} for $\rho=0$.
\end{lemma}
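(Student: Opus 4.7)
The approach parallels Lemma~\ref{asy}, with the alternate rescaling $z_i=v_i/c_i$, $\rho_i=1/c_i$, so that $(u_i,z_i)$ satisfies system \eqref{uz} with $\rho=\rho_i$. Since $u_i\to \tilde u$ in $C(\overline\Omega)$ with $\tilde u>0$, the roles of the two species are reversed with respect to Lemma~\ref{asy}: $u_i$ is already controlled, and the task reduces to establishing uniform two-sided positive bounds on $z_i$. Once such bounds are in hand, the $L^p$/embedding/Schauder bootstrap used at the end of Lemma~\ref{asy} transfers directly to the present setting and delivers a $C^2(\overline\Omega)$-convergent subsequence whose limit $(\tilde u,\tilde z)$ satisfies \eqref{uz} with $\rho=0$.

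For the upper bound on $z_i$, I would apply Lemma~\ref{maxin} to the $z_i$-equation of \eqref{uz}: at a maximum $y_i$ of $z_i$,
\begin{equation*}
-d+\ds\f{u_i(y_i)}{(1+\alpha u_i(y_i))(\rho_i+\beta z_i(y_i))}\ge 0.
\end{equation*}
Since Lemma~\ref{maxin} applied to the $u_i$-equation gives $u_i\le 1$ (exactly as in the proof of Lemma~\ref{asy}) and the map $u\mapsto u/(1+\alpha u)$ is bounded above by $1/(1+\alpha)$ on $(0,1]$, this yields $\sup z_i\le 1/[\beta d(1+\alpha)]$ uniformly in $i$.

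For the lower bound --- the technical heart of Lemma~\ref{asy} --- the hypothesis $\tilde u>0$ on $\overline\Omega$ makes the argument much cleaner. Applying the minimum-principle counterpart of Lemma~\ref{maxin} (equivalently, Lemma~\ref{maxin} to $-z_i$, for which $\partial_\nu(-z_i)=0\le 0$) at a minimum $x_i$ of $z_i$ gives the reverse inequality, so
\begin{equation*}
\rho_i+\beta z_i(x_i)\ge \ds\f{u_i(x_i)}{d(1+\alpha u_i(x_i))}.
\end{equation*}
For $i$ sufficiently large, $u_i(x_i)\ge \tfrac12\min_{\overline\Omega}\tilde u>0$ by uniform convergence, while $\rho_i\to 0$; since $u\mapsto u/(1+\alpha u)$ is increasing, $\inf z_i$ is then bounded below by a positive constant independent of $i$. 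The main obstacle I anticipate is verifying that this minimum-principle step is valid when $x_i\in\partial\Omega$, but the Neumann condition places us within the hypotheses of Lemma~\ref{maxin} after the sign change, so no extra work is required; this is why the present lemma does not need the Harnack/contradiction chain used in Lemma~\ref{asy}.

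Once $z_i$ is confined to a compact subinterval of $(0,\infty)$ uniformly in $i$, the right-hand sides of \eqref{uz} are uniformly bounded in $L^\infty(\Omega)$, so $L^p$ theory yields $(u_i,z_i)$ bounded in $W^{2,p}(\Omega)$ for every $p>1$. For $p>N$ the embedding $W^{2,p}\hookrightarrow C^1(\overline\Omega)$ gives a $C^1$-precompact subsequence $\{i_k\}$ with $u_{i_k}\to \tilde u$ (already known) and $z_{i_k}\to \tilde z$. Passing to the limit in the integral formulation analogous to \eqref{abv} and invoking the Schauder theorem as at the end of the proof of Lemma~\ref{asy} upgrades the convergence to $C^2(\overline\Omega)$; the limit $(\tilde u,\tilde z)$ solves \eqref{uz} with $\rho=0$, and the uniform lower bound guarantees $\tilde z>0$, completing the proof.
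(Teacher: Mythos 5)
Your proposal is correct and follows essentially the same route as the paper: the rescaling $z_i=v_i/c_i$, the maximum-principle bounds $\sup u_i\le 1$ and $\sup z_i\le C/(d\beta)$, the positive lower bound on $u_i$ from the hypothesis $\tilde u>0$, a lower bound on $z_i$ extracted from the second equation, and then the $L^p$/embedding/Schauder bootstrap to pass to a $C^2$ limit solving \eqref{uz} with $\rho=0$. Your explicit minimum-principle derivation of $\inf z_i\ge C_2$ simply fills in a step the paper leaves terse, so no substantive difference.
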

\begin{proof}
Let $z_i=v_i/c_i$ and $\rho_i=1/c_i$, and then $(u_i,z_i)$ satisfies
\begin{equation}\label{stCM22}
\begin{cases}
-d_1\Delta u_i=u_i\left(1-u_i\right)-\ds\f{u_iz_i}{(1+u_i)(\rho_i+\beta z_i)}, & x\in \Omega,\\
-d_2\Delta z_i=-dz_i+\ds\f{u_iz_i}{(1+u_i)(\rho_i+\beta z_i)}, & x\in\Omega,\\
 \partial_\nu  u_i=\partial_\nu
  z_i=0,& x\in \partial \Omega.\\
\end{cases}
\end{equation}
It is deduced by Lemma \ref{maxin} that
\begin{equation}\label{u1}
\sup_{x\in\Omega}u_{i}\le1\;\;\text{for all}\;\;i\ge1,
\end{equation}
which yields
\begin{equation}\label{z1}
\sup_{x\in\Omega}z_{i}\le\ds\f{1}{d\beta}\;\;\text{for all}\;\;i\ge1.
\end{equation}
Since $u_{i}(x)\to \tilde u(x)$ in $C(\overline \Omega)$ as $i\to \infty$, we see that there exists a positive constant $C_1$ such that
\begin{equation}\label{u2}
\inf_{x\in\Omega}u_{i}\ge C_1\;\;\text{for all}\;\;i\ge1.
\end{equation}
Consequently, by the second equation of \eqref{stCM22}, we derive a positive constant $C_2$ satisfying
\begin{equation}\label{z2}
\inf_{x\in\Omega}z_{i}\ge C_2\;\;\text{for all}\;\;i\ge1.
\end{equation}

Finally, we give the limit profile of $\{u_i\}$ and $\{z_i\}$. Similarly to the arguments in the proof of Lemma \ref{asy},
we see that there exists a subsequence $\{i_k\}_{k=1}^{\infty}$  and $(\tilde u(x),\tilde z(x))\in C^1(\overline \Omega)\times C^1(\overline \Omega)$ such that
$$(u_{i_k},v_{i_k}/c_{i_k})=(u_{i_k},z_{i_k})\to (\tilde u(x),\tilde z(x))\;\;\text{in}\;\;C^1(\overline \Omega)\times C^1(\overline \Omega)\text{ as }k\to\infty,$$
where $\tilde w(x)$ and $\tilde z(x)$ are positive from Eqs. \eqref{u2} and \eqref{z2}.
Taking the limit of the following equation
\begin{equation*}
\begin{split}
u_{i_k}= &[-d_1\Delta+I]^{-1}\left[ u_{i_k}+u_{i_k}\left(1- u_{i_k}-\ds\f{z_{i_k}}{(1+u_{i_k})(\rho_{i_k}+\beta z_{i_k})}\right)\right],\\
z_{i_k}=&[-d_2\Delta+I]^{-1}\left[ z_{i_k}+z_{i_k}\left(-d+ \ds\f{u_{i_k}}{(1+u_{i_k})(\rho_{i_k}+\beta z_{i_k})}\right)\right],
\end{split}
\end{equation*}
as $k\to\infty$, we see that
$(\tilde u(x),\tilde z(x))$ is a positive solution of system \eqref{uz} for $\rho=0$.
This completes the proof.
\end{proof}
Now, based on the above two lemmas, we establish the results concerning with the nonexistence of nonconstant steady states for large $c$.
We first consider the case that $\alpha$ and $\beta $ satisfy Eq. \eqref{ca2} (case I).
\begin{theorem}\label{nn1}
Assume that $$\alpha>1\;\;\text{and}\;\; 0<\beta<\ds\f{4\alpha}{(\alpha+1)^2}.$$ Then there exists a positive constant $c_*=c_*(d_1,d_2,\alpha,\beta,d,\Omega)$ such that, for $c>c_*$, system \eqref{CM} has a unique constant positive steady state and no
nonconstant positive steady states.
\end{theorem}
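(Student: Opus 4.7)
I would argue by contradiction. Suppose the conclusion fails; then there exist a sequence $c_n\to\infty$ and nonconstant positive steady states $(u_n,v_n)$ of \eqref{CM} for $c=c_n$. Setting $w_n=c_nu_n$ and $\rho_n=1/c_n$, each $(w_n,v_n)$ is a positive solution of \eqref{wv} with $\rho=\rho_n\to 0^+$. By Lemma~\ref{asy}(i), after passing to a subsequence, $(w_n,v_n)\to(\tilde w,\tilde v)$ in $C^2(\overline{\Omega})$, where $(\tilde w,\tilde v)$ is a positive solution of \eqref{wv} with $\rho=0$.

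The first main step is to show that the limiting system admits only the constant positive solution $(\tilde w_0,\tilde v_0)=\bigl(d/(1-\beta),\,1/(1-\beta)\bigr)$. I would multiply the first equation of \eqref{wv} with $\rho=0$ by $1-\tilde w_0/\tilde w$ and the second by $\tfrac{1}{1-\beta}(1-\tilde v_0/\tilde v)$, integrate over $\Omega$ using the Neumann boundary condition, and invoke the identities $\tilde w_0=d(1+\beta\tilde v_0)$ and $(1-\beta)\tilde v_0=1$ so that the cross terms proportional to $(\tilde w-\tilde w_0)(\tilde v-\tilde v_0)$ cancel. What remains is the sign-definite identity
\begin{equation*}
\frac{d_1\tilde w_0}{1-\beta}\int_\Omega\frac{|\nabla\tilde w|^2}{\tilde w^2}\,dx+d_2\tilde v_0\int_\Omega\frac{|\nabla\tilde v|^2}{\tilde v^2}\,dx+d\beta\int_\Omega\frac{(\tilde v-\tilde v_0)^2}{1+\beta\tilde v}\,dx=0,
\end{equation*}
which forces $\tilde v\equiv\tilde v_0$ and $\tilde w\equiv\tilde w_0$. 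Consequently $(w_n,v_n)\to(\tilde w_0,\tilde v_0)$ in $C^2(\overline{\Omega})$.

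The second step is to upgrade this $C^2$-convergence to the statement that $(w_n,v_n)$ actually equals a constant for all sufficiently large $n$. I would linearize the right-hand side of \eqref{wv} at $(\rho,w,v)=(0,\tilde w_0,\tilde v_0)$: a direct computation gives the Jacobian
\begin{equation*}
J=\begin{pmatrix}0 & -d(1-\beta)\\ 1 & -d\beta\end{pmatrix},
\end{equation*}
for which every matrix $-\mu_jD+J$ with $j\in\mathbb{N}_0$ has strictly negative trace $-(d_1+d_2)\mu_j-d\beta$ and strictly positive determinant $d_1\mu_j(d_2\mu_j+d\beta)+d(1-\beta)$. Hence the linearized steady-state operator is an isomorphism between appropriate Sobolev spaces, and the implicit function theorem produces a locally unique $C^1$ branch of steady states of \eqref{wv} through $(0,\tilde w_0,\tilde v_0)$ parameterized by $\rho$. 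Since Theorem~\ref{equ} ensures that for all sufficiently large $c$ the system admits a unique constant positive equilibrium (which necessarily lies on this branch), local uniqueness forces $(w_n,v_n)$ to coincide with that constant equilibrium for $n$ large, contradicting the nonconstancy hypothesis and simultaneously yielding the claimed uniqueness of the constant steady state.

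The main obstacle is identifying the correct multipliers and the weight $1/(1-\beta)$ that yield the sign-definite integral identity in step one; once that identity is in hand, Lemma~\ref{asy}, standard $L^p$ and Schauder estimates, the implicit function theorem, and Theorem~\ref{equ} combine routinely to close the argument.
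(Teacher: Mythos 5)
Your plan follows the paper's own proof essentially step for step: argue by contradiction, invoke Lemma \ref{asy}(i) to pass to a limit that solves system \eqref{wv} with $\rho=0$, kill all nonconstant solutions of that limit system via a sign-definite integral (Lyapunov-type) identity centered at $(\hat w,\hat v)=\bigl(d/(1-\beta),1/(1-\beta)\bigr)$, verify that this equilibrium is linearly nondegenerate (your Jacobian $J$ and the signs of $\operatorname{Tr}$ and $\operatorname{Det}$ of $-\mu_jD+J$ are correct), and close with the implicit function theorem; this is exactly the paper's argument. The one slip is the normalization of your second multiplier: testing the first equation with $1-\hat w/w$ produces the cross term $-(1-\beta)\int_\Omega\frac{(w-\hat w)(v-\hat v)}{1+\beta v}\,dx$, so cancellation requires the weight $(1-\beta)\bigl(1-\hat v/v\bigr)$ on the second equation (equivalently the paper's $\bigl(\phi_2(v)-\phi_2(\hat v)\bigr)/\phi_2(v)$), not $\frac{1}{1-\beta}\bigl(1-\hat v/v\bigr)$ --- a harmless constant to correct that does not affect the structure of the argument.
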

\begin{proof}
We argue indirectly and assume that there exists $\{c_i\}_{i=1}^\infty$ such that $\lim_{i\to\infty} c_i=\infty$, and system \eqref{CM} has a nonconstant positive steady state $(u_{i}(x),v_{i}(x))$ for any $c=c_i$. Then, owing to Lemma \ref{asy}, there exists a subsequence $\{i_k\}_{k=1}^\infty$ such that
$(c_{i_k}u_{i_k}(x),v_{i_k}(x))\to (\tilde w(x),\tilde v(x))$ in $C^2(\overline \Omega)$ as $k\to \infty$, where $(\tilde w(x), \tilde v(x))$ is a positive solution of system \eqref{wv} for $\rho=0$.

For $\rho=0$, system \eqref{wv} has a unique constant positive steady state $$(\hat w,\hat v)=\left(\ds\f{d}{1-\beta},\ds\f{1}{1-\beta}\right).$$
Set
\begin{equation*}
\begin{split}
G(w,v):=&\int_{\Omega}\left\{\ds\f{w-\hat w}{w}\left[d_1\Delta w+w\left(1-\ds\f{ v}{1+\beta v}\right)\right]\right \}dx\\
+&\int_{\Omega}\left\{\ds\f{\phi_2(v)-\phi_2(\hat v)}{\phi_2(v)}\left[d_2\Delta v+\phi_2(v)\left(-d-d\beta v+w\right)\right]\right\}dx,\\
\end{split}
\end{equation*}
where $\phi_2(v)$ is defined as in Eq. \eqref{pspi}. Through a direct calculation, we see that if $(w,v)$ satisfies system \eqref{wv} for $\rho=0$, then
\begin{equation}\label{G}
\begin{split}
G(w,v)=&-d_1\hat w\int_{\Omega}\ds\f{|\nabla w|^2}{w^2}dx-d_2\phi_2(\hat v)\int_\Omega\ds\f{\phi'_2(v)}{[\phi_2(v)]^2}|\nabla v|^2dx\\-&d\beta \int_\Omega \ds\f{(v-\hat v)^2}{(1+\beta v)(1+\beta \hat v)}dx.
\end{split}
\end{equation}
Then, $(\tilde w(x),\tilde v(x))\equiv\left(\hat w,\hat v\right)$, and consequently, $$(c_{i_k}u_{i_k}(x),v_{i_k}(x))\to \left(\hat w,\hat v\right)\;\;\text{in}\;\;C^2(\overline \Omega),$$as $k\to \infty$.

By the careful calculation, we can see that all the eigenvalues of $\left(\hat w,\hat v\right)$ are negative for the corresponding parabolic equation of system \eqref{wv} when $\rho=0$. Then, taking advantage of the implicit theorem, there exists $\rho_0>0$ such that, for $\rho<\rho_0$, system \eqref{wv} has a unique solution in the neighborhood of $\left(\hat w,\hat v\right)$ in $C^1(\overline \Omega)$, and this solution is constant and locally asymptotically stable for the corresponding parabolic equation. It follows that $(u_{i_k}(x),v_{i_k}(x))$ is constant for sufficiently large $k$, which is a contradiction. The proof is complete.
\end{proof}
Then, we consider the case that $\alpha$ and $\beta $ satisfy Eq. \eqref{ca1} (case II).
\begin{theorem}\label{nn2}
Assume that $$\alpha>1,\;\ds\f{4\alpha}{(\alpha+1)^2}<\beta<1\text{ and } d_1>1/\mu_1,$$
where $\mu_1$ is defined as in Eq. \eqref{mmu}.
Then there exists a positive constant $c_*=c_*(d_1,d_2,\alpha,\beta,d,\Omega)$ such that, for $c>c_*$, system \eqref{CM} has three constant positive steady states and no nonconstant positive steady states.
\end{theorem}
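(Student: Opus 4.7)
The plan is to argue by contradiction along the same lines as Theorem \ref{nn1}, adapted to handle the three branches of constant positive equilibria that exist in case II (see Theorem \ref{equ}). Suppose there is a sequence $c_i\to\infty$ and nonconstant positive steady states $(u_i,v_i)$ of \eqref{stCM} at $c=c_i$. Lemma \ref{maxin} gives $\sup_\Omega u_i\le 1$; writing the first steady-state equation as $\Delta u_i+c_i(x)u_i=0$ with $c_i(x):=d_1^{-1}[(1-u_i)-v_i/\{(1+\alpha u_i)(1+\beta v_i)\}]$ bounded in $L^\infty$ uniformly in $i$, Lemma \ref{harnack} yields $\sup u_i\le C\inf u_i$ with $C$ independent of $i$. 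After passing to a subsequence one has $u_i\to u^*$ uniformly on $\overline\Omega$, and the Harnack bound forces the dichotomy $u^*\equiv 0$ or $u^*>0$ on $\overline\Omega$; I handle the two alternatives separately.

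If $u^*\equiv 0$, Lemma \ref{asy}(ii) applies and yields, along a further subsequence, $(c_{i_k}u_{i_k},v_{i_k})\to(\tilde w,\tilde v)$ in $C^2(\overline\Omega)$, with $(\tilde w,\tilde v)$ a positive solution of \eqref{wv} at $\rho=0$. The Lyapunov functional $G(w,v)$ introduced in the proof of Theorem \ref{nn1} applies verbatim and forces $(\tilde w,\tilde v)\equiv(\hat w,\hat v)=(d/(1-\beta),1/(1-\beta))$. The hyperbolicity of $(\hat w,\hat v)$ established in Theorem \ref{nn1} allows the implicit function theorem applied to \eqref{wv} to produce a unique $C^2$-solution near $(\hat w,\hat v)$ for small $\rho$, necessarily the constant one, contradicting nonconstancy of $(c_{i_k}u_{i_k},v_{i_k})$ for large $k$.

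If instead $u^*>0$ on $\overline\Omega$, Lemma \ref{asy2} supplies a $C^2$-limit $(u_{i_k},v_{i_k}/c_{i_k})\to(u^*,\tilde z)$, with $(u^*,\tilde z)$ a positive solution of \eqref{uz} at $\rho=0$. At $\rho=0$ the $z$-dependence drops out of the first equation, which decouples to the scalar Neumann problem
\begin{equation*}
-d_1\Delta u^*=f(u^*),\qquad\partial_\nu u^*=0,\qquad f(u):=u(1-u)-\frac{u}{\beta(1+\alpha u)}.
\end{equation*}
I then show $u^*$ is constant by a Poincar\'e argument: multiplying by $u^*-\bar u^*$ with $\bar u^*=|\Omega|^{-1}\int_\Omega u^*$, integrating by parts, and combining the mean value theorem with $\int_\Omega(u^*-\bar u^*)^2\le\mu_1^{-1}\int_\Omega|\nabla u^*|^2$ gives
\begin{equation*}
d_1\int_\Omega|\nabla u^*|^2\le\frac{\sup_{[0,1]}f'}{\mu_1}\int_\Omega|\nabla u^*|^2.
\end{equation*}
A direct computation yields $f'(u)=1-2u-1/[\beta(1+\alpha u)^2]<1$ for every $u\in[0,1]$, while the hypothesis $d_1>1/\mu_1$ gives $d_1\mu_1>1>\sup_{[0,1]}f'$. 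Hence $\nabla u^*\equiv 0$, so $u^*$ is a positive constant with $f(u^*)=0$, which forces $u^*\in\{u_{1,*},u_{2,*}\}$; the second equation of \eqref{uz} at $\rho=0$ then pins $\tilde z$ to the matching constant $z_{j,*}=u_{j,*}/[d\beta(1+\alpha u_{j,*})]$.

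To close the contradiction, the implicit function theorem is applied to \eqref{uz} near $(u_{j,*},z_{j,*})$ and $\rho=0$: computing the Jacobian explicitly and exploiting the decoupled structure, one checks that each characteristic matrix $Q_k=-\mu_k D+J$ is invertible for every $k\in\mathbb{N}_0$, where the inequality $d_1\mu_1>1>f'(u_{1,*})$ is again precisely what rules out a zero eigenvalue at the otherwise delicate entry arising when $u^*=u_{1,*}$. Consequently the only $C^2$-solution branch of \eqref{uz} near $(u_{j,*},z_{j,*})$ for small $\rho$ is the constant one, contradicting the nonconstancy of $(u_{i_k},v_{i_k}/c_{i_k})$ for large $k$. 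The main obstacle is the scalar reduction in the $u^*>0$ case: recognizing that the $z$-dependence disappears from the first equation at $\rho=0$, and then matching the universal analytic bound $\sup_{[0,1]}f'<1$ with the spectral hypothesis $d_1>1/\mu_1$ so that the Poincar\'e inequality closes with strict sign.
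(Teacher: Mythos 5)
Your argument is correct and follows essentially the same route as the paper's proof: contradiction with $c_i\to\infty$, the dichotomy $u^*\equiv 0$ versus $u^*>0$ obtained from compactness plus a Harnack-type bound, Lemma \ref{asy} with the Lyapunov functional and the implicit function theorem in the first case, and Lemma \ref{asy2} with the decoupled scalar equation, the Poincar\'e inequality under $d_1>1/\mu_1$, and the implicit function theorem in the second. Your explicit check that $d_1\mu_1>1>f'(u_{1,*})$ excludes a zero eigenvalue of the linearization at $(u_{1,*},z_{1,*})$ merely fills in a step the paper asserts ``by the careful calculation.''
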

\begin{proof}
Suppose on the contrary that there exists $\{c_i\}_{i=1}^\infty$ such that $\lim_{i\to\infty} c_i=\infty$, and system \eqref{CM} has a nonconstant positive steady state $(u_{i}(x),v_{i}(x))$ for any $c=c_i$. By the arguments similar to \cite{Du-HSU2}, we first show that there exists a subsequence $\{i_k\}_{k=1}^\infty$ such that
\begin{equation*}
\begin{split}
&\text{ Case 1: } u_{i_k}(x)\to 0 \text{ in }C^1(\overline \Omega) \text{ or}\\
&\text{ Case 2: } u_{i_k}(x)\to \tilde u(x) \text{ in }C^1(\overline \Omega), \text{ where }\tilde u(x)>0\text{ for }x\in\overline \Omega,
\end{split}
\end{equation*}
as $k\to\infty$.
Denote $$f_i(x)=1-u_i(x)-\ds\f{v_i(x)}{(1+\alpha u_i(x))(1+\beta v_i(x))}.$$
Since $\sup_{x\in\Omega}u_{i}\le1$ for all $i\ge1$, we have $\|f_i(x)\|_\infty\le 2+1/\beta$ for all $i\ge1$.
It follows from the $L^p$ theory that $\{u_i\}$ is bounded in $ W^{2,p}(\Omega)$ for any $p > N$. Consequently, by the embedding theorem,  $\{u_i\}$ is precompact in $C^1(\overline \Omega)$. Then, there exists a subsequence $\{i_k\}_{k=1}^\infty$ such that
$u_{i_k}(x)\to \tilde u(x)$ in $C^1(\overline \Omega)$ and $f_{i_k}(x)\to f(x) $ weakly in $L^2(\Omega)$ as $k\to \infty$. We note that $\| f(x)\|_\infty\le 2+1/\beta$ since each $f_{i}$ has this property. Therefore, $\tilde u$ is a weak solution
of the following equation
\begin{equation}\label{G22}
\begin{cases}
-d_1\Delta  u=f(x)u, & x\in \Omega,\\
 \partial_\nu  u=0,& x\in \partial \Omega.\\
\end{cases}
\end{equation}
Noticing that $f(x)\in L^\infty(\Omega)$, we have $\tilde u(x)\equiv 0$ or $\tilde u(x)>0$ for $x\in\overline \Omega$. Then the following discussion is divided into two case.

Case 1: $u_{i_k}(x)\to 0 \text{ in }C^1(\overline \Omega)$ as $k\to\infty$. We denote $u_{i_k}$ by $u_i$ for convenience. It follows from Lemma \ref{asy} that there exists a subsequence $\{i_n\}_{n=1}^\infty$ such that
$$(c_{i_n}u_{i_n}(x),v_{i_n}(x))\to (\tilde w(x),\tilde v(x))$$ in $C^2(\overline \Omega)$ as $n\to \infty$, where $(\tilde w(x), \tilde v(x))$ is a positive solution of system \eqref{wv} for $\rho=0$. By the arguments similar to Theorem \ref{nn1}, we see that $(\tilde w(x),\tilde v(x))\equiv \left(\ds\f{d}{1-\beta},\ds\f{1}{1-\beta}\right)$, and $(c_{i_n}u_{i_n}(x),v_{i_n}(x))$ is constant for sufficiently large $n$, which is a contradiction.

Case 2: $u_{i_k}(x)\to \tilde u(x)$ in $C^1(\overline \Omega)$ as $k\to\infty$, where $\tilde u(x)>0$ for $x\in\overline \Omega$.
We also denote $u_{i_k}$ by $u_i$ for convenience. Due to Lemma \ref{asy2}, there exists a subsequence $\{i_n\}_{n=1}^\infty$ such that
$(u_{i_n}(x),v_{i_n}(x)/c_{i_n})\to (\tilde u(x),\tilde z(x))$ in $C^2(\overline \Omega)$ as $n\to \infty$, where $(\tilde u(x),\tilde z(x))$ is a positive solution of \eqref{uz} for $\rho=0$.
Then, we consider the steady states of system \eqref{uz} for $\rho=0$, which satisfy
\begin{equation}\label{uzred}
\begin{cases}
-d_1\Delta u=u\left(1-u\right)-\ds\f{u}{\beta(1+\alpha u)}, & x\in \Omega,\\
-d_2\Delta z=-dz+\ds\f{u}{\beta(1+\alpha u)}, & x\in\Omega,\\
 \partial_\nu  u=\partial_\nu
  z=0,& x\in \partial \Omega.\\
\end{cases}
\end{equation}
Clearly, Eq. \eqref{uzred} has two constant positive steady states, denoted by $(\hat u_1,\hat z_1)$ and $(\hat u_2,\hat z_2)$.
Denote $\overline u=\ds\f{1}{|\Omega|}\int_\Omega udx$. Then, multiplying the first equation of \eqref{uzred} by $u-\overline u$, and integrating the result over $\Omega$, we have \begin{equation*}
\begin{split}
&d_1\int_\Omega |\nabla (u-\overline u)|^2dx\\
=&\int_\Omega (u-\overline u)\left(u\left(1-u\right)-\ds\f{u}{\beta(1+\alpha u)}-\overline u\left(1-\overline u\right)+\ds\f{\overline u}{\beta(1+\alpha \overline u)}\right)dx\\
\le& \int_\Omega (u-\overline u)^2dx.
\end{split}
\end{equation*}
This, combined with the Poincar\'e inequality, yields
$$d_1\mu_1\int_\Omega (u-\overline u)^2dx\le d_1\int_\Omega |\nabla (u-\overline u)|^2dx\le \int_\Omega (u-\overline u)^2dx.$$
Noticing that $d_1>1/\mu$, we have $u(x)\equiv \overline u$, and hence $\tilde u(x)\equiv \hat u_1$ or $\tilde u(x)\equiv \hat u_2$, which implies that
$(u_{i_n}(x),v_{i_n}(x)/c_{i_n})\to ( \hat u_1,\hat z_1) \text{ or } ( \hat u_2,\hat z_2)$
in $C^2(\overline \Omega)$ as $n\to \infty$. By the careful calculation, we obtain that zero is not the eigenvalue of the linearized problem for Eq. \eqref {uzred} with respect to $(\hat u_i,\hat z_i)$ for $i=1,2$. By the implicit theorem, we see that, for each $i=1,2$, there exists $\rho_i>0$ such that system \eqref{uz} has a unique positive solution in the neighborhood of $\left(\hat u_i,\hat z_i\right)$ in $C^1(\overline \Omega)$ for $\rho<\rho_i$. Therefore, $(u_{i_k}(x),v_{i_k}(x))$ is constant for sufficiently large $k$, which is a contradiction.
\end{proof}
At the end of this section, we show the nonexistence of nonconstant positive steady states when diffusion rates $d_1$ and $d_2$ are large. This result will be used in the next section, and the arguments are similar to \cite{NiW,WangSW}.
\begin{theorem}\label{coef}
There exists a positive constant $d_*=d_*(\alpha,\beta,c,d,\Omega)$ such that system \eqref{stCM} has no nonconstant positive solutions for $d_1,d_2\ge d_*$.
\end{theorem}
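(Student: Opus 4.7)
The plan is to combine diffusion-independent $L^\infty$ a priori bounds with a classical energy estimate driven by the Poincar\'e inequality, in the spirit of \cite{Lou-Ni,NiW}.

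First I would derive uniform $L^\infty$ bounds on any positive solution that depend only on $\alpha,\beta,c,d$. Applying Lemma \ref{maxin} to the first equation of \eqref{stCM} immediately gives $\sup_\Omega u \le 1$. Applying it to the second equation at a maximum point $x_0$ of $v$, together with $u(x_0)\le 1$ and the monotonicity of $s\mapsto s/(1+\alpha s)$ on $[0,1]$, yields
\[
d\le\frac{cu(x_0)}{(1+\alpha u(x_0))(1+\beta v(x_0))}\le\frac{c}{(1+\alpha)(1+\beta v(x_0))},
\]
which produces a bound $\sup_\Omega v\le M$ for some $M=M(\alpha,\beta,c,d)>0$ independent of $d_1,d_2$. (If $c\le d(1+\alpha)$ this estimate already rules out positive steady states, consistent with Theorem \ref{glo}(i).)

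Next I would carry out the energy argument. Let $\bar u=|\Omega|^{-1}\int_\Omega u\,dx$ and $\bar v=|\Omega|^{-1}\int_\Omega v\,dx$, and denote the two reaction nonlinearities of \eqref{stCM} by $F(u,v)$ and $H(u,v)$. Multiplying the two equations by $u-\bar u$ and $v-\bar v$ respectively, integrating over $\Omega$, and using that $\int_\Omega (u-\bar u)F(\bar u,\bar v)\,dx=0$ and similarly for $H$, the right-hand sides become
\[
\int_\Omega (u-\bar u)[F(u,v)-F(\bar u,\bar v)]\,dx,\qquad \int_\Omega (v-\bar v)[H(u,v)-H(\bar u,\bar v)]\,dx.
\]
Since $(u,v)$ ranges over the compact rectangle $[0,1]\times[0,M]$, the mean value theorem produces Lipschitz constants for $F$ and $H$ that depend only on $\alpha,\beta,c,d$. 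Applying Young's inequality to the cross terms and the Poincar\'e inequality $\int_\Omega|\nabla w|^2\,dx\ge\mu_1\int_\Omega(w-\bar w)^2\,dx$ to the diffusion terms yields an estimate of the form
\[
(d_1\mu_1-K)\int_\Omega (u-\bar u)^2\,dx+(d_2\mu_1-K)\int_\Omega (v-\bar v)^2\,dx\le 0
\]
for a single constant $K=K(\alpha,\beta,c,d,\Omega)$. Setting $d_*:=K/\mu_1$, the choice $d_1,d_2\ge d_*$ forces $u\equiv\bar u$ and $v\equiv\bar v$, so every positive steady state is constant.

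The main obstacle is essentially bookkeeping rather than analysis: I must verify that each Lipschitz constant for $F$ and $H$, and therefore the single constant $K$, genuinely depends only on the parameters $\alpha,\beta,c,d,\Omega$ and not on the particular solution or on $d_1,d_2$. This is guaranteed by the diffusion-free bound $\sup_\Omega v\le M$ established in step one, together with the smoothness of the Crowley-Martin reaction terms on the compact rectangle $[0,1]\times[0,M]$. No information about higher eigenvalues $\mu_j$ is required; only $\mu_1>0$ enters the argument.
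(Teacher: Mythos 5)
Your proposal is correct and follows essentially the same route as the paper's proof: diffusion-independent a priori bounds via the maximum principle, followed by the energy method (multiply by $u-\overline u$, $v-\overline v$, integrate, and apply Young's and Poincar\'e's inequalities) to force $u\equiv\overline u$, $v\equiv\overline v$ for large $d_1,d_2$. The only cosmetic differences are that you derive a pointwise bound $\sup_\Omega v\le M$ and invoke Lipschitz constants on a compact rectangle, whereas the paper uses only the mean bound $\overline v\le c/d$ and computes the constants $A,B$ by explicit term-by-term estimates.
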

\begin{proof}
Let $(u,v)$ be a positive solution of system \eqref{stCM}, and denote
$$\overline u=\ds\f{1}{|\Omega|}\int_\Omega udx,\;\;\overline v=\ds\f{1}{|\Omega|}\int_\Omega vdx.$$
By Lemma \ref{maxin}, we have $0<u\le1$ for $x\in\overline\Omega$, which leads to $0<\overline u\le1$.
Noticing that $ c\int_\Omega u(1-u)dx=d\int_\Omega v dx$, we have $\overline v\le\ds\f{c}{d}$.
Then, multiplying the first equation of system \eqref{stCM} by $u-\overline u$, and integrating the result over $\Omega$, we have
\begin{equation*}
\begin{split}
&d_1\int_\Omega|\nabla (u-\overline u)|^2dx\\
=&\int_\Omega (u-\overline u)[u(1-u)-\overline u(1-\overline u)]dx\\
-&\int_\Omega (u-\overline u)\left[\ds\f{uv}{(1+\alpha u)(1+\beta v)}-\ds\f{\overline u \;\overline v}{(1+\alpha \overline u)(1+\beta \overline v)}\right]dx\\
\le &\int_\Omega (u-\overline u)^2 dx-\int_\Omega \ds\f{\overline u(u-\overline u)(v-\overline v)}{(1+\alpha u)(1+\beta v)(1+\beta \overline v)}dx
+\int_\Omega \ds\f{\alpha \overline u\;\overline v(u-\overline u)^2}{(1+\alpha\overline u )(1+\alpha u)(1+\beta \overline v)}dx\\
\le&\left( \ds\f{3}{2}+\ds\f{c\alpha}{d}\right)\int_\Omega (u-\overline u)^2 dx+\ds\f{1}{2}\int_\Omega (v-\overline v)^2 dx.
\end{split}
\end{equation*}
Similarly, multiplying the second equation of system \eqref{stCM} by $v-\overline v$, and integrating the result over $\Omega$, we get
\begin{equation*}
\begin{split}
&d_2\int_\Omega|\nabla (v-\overline v)|^2dx\\
=&\int_\Omega (v-\overline v)\left[-dv+d\overline v+\ds\f{cuv}{(1+\alpha u)(1+\beta v)}-\ds\f{c\overline u \;\overline v}{(1+\alpha \overline u)(1+\beta \overline v)}\right]dx\\
\le&\ds\f{c}{1+\alpha}\int_\Omega (v-\overline v)^2dx+\int_\Omega \ds\f{c\overline v(v-\overline v)(u-\overline u)}{(1+\alpha u)(1+\beta v)(1+\alpha \overline u)}dx\\
\le&\left(\ds\f{c}{1+\alpha}+\ds\f{c^2}{2d}\right)\int_\Omega (v-\overline v)^2dx+\ds\f{c^2}{2d}\int_\Omega (u-\overline u)^2dx.
\end{split}
\end{equation*}
Denote
\begin{equation*}
A=\ds\f{3}{2}+\ds\f{c\alpha}{d}+\ds\f{c^2}{2d},\;\;\text{and}\;\;
B=\ds\f{c}{1+\alpha}+\ds\f{c^2}{2d}+\ds\f{1}{2}.
\end{equation*}
Then, due to the Poincar\'e inequality, we have
\begin{equation}
\begin{split}
&d_1\int_\Omega|\nabla (u-\overline u)|^2dx+d_2\int_\Omega|\nabla (v-\overline v)|^2dx\\
\le &\ds\f{A}{\mu_1}\int_\Omega|\nabla (u-\overline u)|^2dx+\ds\f{B}{\mu_1}\int_\Omega|\nabla (v-\overline v)|^2dx.
\end{split}
\end{equation}
Therefore, if $\min\{d_1,d_2\}>\ds\f{1}{\mu_1}\max\{A,B\}$, then
$$\nabla (u-\overline u)=\nabla (v-\overline v)\equiv 0,$$ which implies that
$u$ and $v$ are both constants.
\end{proof}
\subsection{The existence}
In this subsection, we shall use the Leray-Schauder degree theory to investigate the existence of nonconstant positive solutions of system \eqref{stCM}. Recall that we assume $\alpha>1$ and $\beta<1$ throughout the whole section.
The arguments here are motivated by \cite{NiW,pangw}. First we derive a \textit{priori} upper and lower bounds for
positive solutions of system \eqref{stCM}.
\begin{lemma}\label{les}
Assume that $c>d(1+\alpha)$. Let $\underline {d}_1\le d_1\le \overline{d}_1$ and $d_2\ge\underline{d}_2$, where $\underline{d}_1$, $\overline{d}_1$ and $\underline{d}_2$ are positive constants, and $(u(x),v(x))$ be a positive solution of system \eqref{stCM}. Then, there exist two positive constants
$\underline C=\underline C(\underline{d}_1,\overline{d}_1,\underline{d}_2,d,\alpha,\beta,c)$ and $\overline C=\overline C(\beta,c,d)$ such that
$$\underline C\le \inf_{x\in\Omega}u(x)\le\sup_{x\in\Omega}u(x)\le\overline C,\;\text{ and }\;\underline C\le\inf_{x\in\Omega}v(x)\le\sup_{x\in\Omega}v(x)\le\overline C,$$
for all $d_1\in[\underline{d}_1,\overline{d}_1]$ and $d_2\ge\underline{d}_2$.
\end{lemma}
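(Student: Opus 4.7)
The plan is to split the statement into an upper-bound half, proved by direct application of Lemma \ref{maxin}, and a lower-bound half, handled by a compactness-and-contradiction argument that combines Lemma \ref{harnack} with integration identities and uses the standing hypothesis $c > d(1+\alpha)$ in an essential way. The main obstacle will be the lower bound, specifically the branch where $\sup v_i\to 0$ rather than $\sup u_i\to 0$, because there one must simultaneously upgrade the smallness of $v_i$ to the largeness of $u_i$ via a minimum principle and then exploit $c>d(1+\alpha)$ quantitatively to force the reaction term for $v$ to be strictly positive.

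For the upper bounds, I apply Lemma \ref{maxin} to the $u$-equation: at a maximizer $x_0$ of $u$ the reaction must be nonnegative, giving $1-u(x_0)\ge v(x_0)/[(1+\alpha u(x_0))(1+\beta v(x_0))]>0$ and hence $\sup u\le 1$. Applying Lemma \ref{maxin} to the $v$-equation at a maximizer $x_1$ of $v$, together with the crude estimate $u/(1+\alpha u)<u\le 1$, yields $c/(1+\beta v(x_1))\ge d$, so $\sup v\le (c-d)/(d\beta)$. Choosing $\overline{C}:=\max\{1,(c-d)/(d\beta)\}$ produces a constant depending only on $\beta,c,d$, as required.

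For the lower bounds, I argue by contradiction: suppose there exist sequences $(d_{1,i},d_{2,i})$ in the admissible range and positive solutions $(u_i,v_i)$ of \eqref{stCM} with $\min\{\inf_\Omega u_i,\inf_\Omega v_i\}\to 0$. Rewriting the two equations as $\Delta u_i+c_{1,i}(x)u_i=0$ and $\Delta v_i+c_{2,i}(x)v_i=0$, the upper bounds together with $d_{1,i}\ge\underline{d}_1$ and $d_{2,i}\ge\underline{d}_2$ yield uniform $L^\infty$ bounds on $c_{1,i}$ and $c_{2,i}$, so Lemma \ref{harnack} provides constants $K_u,K_v$, depending only on $\underline{d}_1,\underline{d}_2,\beta,c,d,\Omega$, with $\sup u_i\le K_u\inf u_i$ and $\sup v_i\le K_v\inf v_i$. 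After extracting a subsequence, either $\sup u_i\to 0$ or $\sup v_i\to 0$. In the former case, integrating the $v$-equation gives $d\int_\Omega v_i\,dx\le (c|\Omega|/\beta)\sup u_i\to 0$, and the Harnack estimate then forces $\sup v_i\to 0$; integrating the $u$-equation yields
$$0=\int_\Omega u_i\Bigl[1-u_i-\tfrac{v_i}{(1+\alpha u_i)(1+\beta v_i)}\Bigr]\,dx,$$
whose integrand is bounded below by $u_i/2>0$ for large $i$, a contradiction. In the latter case, the minimum-point analogue of Lemma \ref{maxin} applied to the $u$-equation produces $1-\inf u_i\le \sup v_i\to 0$, so $u_i\to 1$ uniformly; integrating the $v$-equation then gives
$$0=\int_\Omega v_i\Bigl[-d+\tfrac{cu_i}{(1+\alpha u_i)(1+\beta v_i)}\Bigr]\,dx,$$
whose bracket tends uniformly to $c/(1+\alpha)-d>0$ by the hypothesis $c>d(1+\alpha)$, making the integral strictly positive for large $i$, again a contradiction. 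This yields the desired $\underline{C}=\underline{C}(\underline{d}_1,\overline{d}_1,\underline{d}_2,d,\alpha,\beta,c)$.
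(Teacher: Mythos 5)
Your proof is correct, and its overall skeleton (maximum principle for the upper bounds, then Harnack plus a two-case contradiction argument for the lower bounds, with $c>d(1+\alpha)$ entering through an integral identity for the $v$-equation) matches the paper's. The interesting divergence is in the case $\inf_\Omega v_i\to 0$: the paper upgrades $v_i\to 0$ to $u_{i}\to 1$ by elliptic $L^p$ estimates, the Sobolev embedding, and extraction of a convergent subsequence of diffusion coefficients --- which is precisely where the compactness of $[\underline{d}_1,\overline{d}_1]$ is used --- and only then passes to the limit in $\int_\Omega \frac{cu}{1+\alpha u}\,dx\le d\int_\Omega(1+\beta v)\,dx$. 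You instead apply the minimum-point counterpart of Lemma \ref{maxin} to the $u$-equation to get $1-\inf_\Omega u_i\le\sup_\Omega v_i$ directly, so $u_i\to 1$ uniformly with no regularity theory and, in fact, with no use of $\overline{d}_1$ at all; this is a genuine simplification (the minimum version follows from Lemma \ref{maxin} applied to $-u$, and is worth stating explicitly since the paper only records the maximum version). Your handling of the other case also differs slightly --- you first show $\sup_\Omega v_i\to 0$ and then contradict the integrated $u$-equation, whereas the paper contradicts the integrated $v$-equation directly from $u_i\to 0$ --- but both are sound. One cosmetic remark: your upper bound $(c-d)/(d\beta)$ for $v$ is marginally sharper than the paper's $c/(d\beta)$; either serves.
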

\begin{proof}
By Lemma \ref{maxin}, we have $0<u\le1$ for $x\in\overline\Omega$. Consequently,
$-d_1\Delta u\le -dv+c/\beta$, and hence $\sup_{x\in\Omega}v(x)\le c/d\beta$ from Lemma \ref{maxin}. Let $\overline C(\beta,c,d)=\max\{1,c/d\beta\}$. Then we have
\begin{equation}\label{upb}
\sup_{x\in\Omega}u(x),\sup_{x\in\Omega}v(x)\le\overline C,
\end{equation}
which leads to
\begin{equation*}
\begin{split}
&\ds\f{1}{d_1}\left\|1-u-\ds\f{v}{(1+\alpha u)(1+\beta v)}\right\|_\infty\le\ds\f{2+\overline C}{\underline{d}_1}\;\text{for all}\;\;d_1\in[\underline{d}_1,\overline{d}_1],\\
&\ds\f{1}{d_2}\left\|-d+\ds\f{u}{(1+\alpha u)(1+\beta v)}\right\|_\infty\le\ds\f{d+\overline C}{\underline{d}_2}\;\text{for all}\;\;d_2\ge\underline{d}_2.
\end{split}
\end{equation*}
Then it follows from Lemma \ref{harnack} that there exists a positive constant $C_1$ such that
\begin{equation}\label{ha}
\sup_{x\in\Omega}u(x)\le C_1\inf_{x\in\Omega}u(x) \;\;\text{and}\;\;\sup_{x\in\Omega}v(x)\le C_1\inf_{x\in\Omega}v(x),
\end{equation}
for all $d_1\in[\underline{d}_1,\overline{d}_1]$ and $d_2\ge \underline{d}_2$.
Now, we derive the lower bounds for $u$ and $v$. In fact, we claim that there exists  a positive constant $C(\underline{d}_1,\overline{d}_1,\underline{d}_2,d,\alpha,\beta,c)$ such that
$$\inf_{x\in\Omega}u(x),\inf_{x\in\Omega}v(x)\ge \underline C,$$
  for all $d_1\in[\underline{d}_1,\overline{d}_1]$ and $d_2\ge \underline{d}_2$.
 If it is not true, then there exists a sequence $\{(d_1^{(i)},d_2^{(i)})\}_{i=1}^\infty$ such that the corresponding solution $(u_i(x),v_i(x))$ for $d_1=d_1^{(i)}$ and $d_2=d_2^{(i)}$ satisfies
$$\lim_{i\to\infty}\inf_{x\in\Omega}u_i(x)=0 \;\;\text{or}\;\;\lim_{i\to\infty}\inf_{x\in\Omega}v_i(x)=0.$$
We first consider the case that $\lim_{i\to\infty}\inf_{x\in\Omega}u_i(x)=0$. By virtue of Eq. \eqref{ha}, we have
$\lim_{i\to\infty}u_i(x)=0$ in $C(\overline\Omega)$, which implies that
$$\int_\Omega v_i\left[d-\ds\f{cu_i}{(1+\alpha u_i)(1+\beta v_i)}\right]dx>0\;\; \text{for sufficiently large }i. $$
This is a contradiction. Then we consider the case that $\lim_{i\to\infty}\inf_{x\in\Omega}v_i(x)=0$. Again, by Eq. \eqref{ha}, we have $\lim_{i\to\infty}v_i(x)=0$ in $C(\overline\Omega)$. Due to the $L^p$ theory and
embedding theorem, there exists a subsequence $\{i_k\}_{k=1}^\infty$ such that
$\lim_{k\to\infty}d_{i_k}=d_0$ and $\lim_{k\to\infty}u_{i_k}(x)=1$ in $C^1(\overline\Omega)$.
From the second Equation of \eqref{stCM}, we get
$$\int_{\Omega}\ds\f{cu_{i_k}}{1+\alpha u_{i_k}}dx\le\int_\Omega d(1+v_{i_k})dx.$$
Taking the limit of the above equation as $k\to\infty$, we have $\ds\f{c}{1+\alpha}\le d$, which contradicts with $c>d(1+\alpha)$.
This completes the proof.
\end{proof}
As in \cite{NiW}, Define $$\mathbf{X}=\{\mathbf u=(u,v)\in C^1(\overline\Omega)\times C^1(\overline\Omega):\partial_\nu u=\partial_\nu v=0 \text{ on } \partial \Omega\}.$$
As in Eq. \eqref{egen}, here we still use $\mathbf{u}$ instead of $\mathbf{u}^T$ for simplicity.
Then system \eqref{stCM} is equivalent to
\begin{equation*}
\begin{cases}
  -D\Delta \mathbf u =G(\mathbf u), & x\in \Omega,\\
 \partial_\nu  \mathbf u=0,& x\in \partial \Omega,\\
\end{cases}
\end{equation*}
or
\begin{equation}\label{FF}
F(d_1,d_2,\mathbf{u})=\mathbf{u}-(I-\Delta)^{-1}\{D^{-1} G(\mathbf{u})+\mathbf{u}\}=\mathbf{0}\text{ on }\mathbf{X},
\end{equation}
where $D$ and $G(\mathbf{u})$ are defined as in \eqref{cgu}, $(I-\Delta)^{-1}$ is the inverse of $I-\Delta$ with the homogeneous Neumann boundary condition, and $\mathbf{0}=(u(x),v(x))\equiv(0,0)\in\mathbf{X}$. Let $\mathbf{u_i}=(u_i,v_i)\; (i=1,\cdots,n)$ be solutions of system \eqref{stCM}, where $n=1,2$ or $3$ under different conditions. As in \cite{NiW}, we also  define
$$
H_i(d_1,d_2,\la):=
d_1d_2\la^2+\left(d\beta \phi_2( v_i)d_1-d_2\phi_1(u_i)\psi'_1(u_i)\right)\la+\text{Det } G_{\mathbf u}(\mathbf{u_i}),
$$
where $G_{\mathbf u}(\mathbf{\cdot})$ is defined as in Eq. \eqref{trd}, and $\phi_i\;(i=1,2)$ and $\psi_1$ are defined in Eq. \eqref{pspi}. Actually,
$$H_i(d_1,d_2,\mu_j)=\text{Det } Q_j(\mathbf { u_i}),$$
where $\text{Det } Q_j(\mathbf{\cdot})$ is defined as in Eq. \eqref{trd}.
For any fixed $d_1$, $\alpha$, $\beta$, $c$, $d$, if $d_2$ is sufficiently large, then
$H_i(d_1,d_2,\la)=0$ has two real roots
\begin{equation*}
\begin{split}
&\lambda_i^{-}(d_1,d_2)=\ds\f{-P+\sqrt{P^2-4d_1d_2\text{Det } G_{\mathbf u}(\mathbf{u_i})}}{2d_1d_2},\\
&\lambda_i^{+}(d_1,d_2)=\ds\f{-P-\sqrt{P^2-4d_1d_2\text{Det } G_{\mathbf u}(\mathbf{u_i})}}{2d_1d_2},\\
\end{split}
\end{equation*}
where $$P=d\beta \phi_2( v_i)d_1-d_2\phi_1(u_i)\psi'_1(u_i).$$
Set
\begin{equation}\label{BS}
\begin{split}
&E=\{\mu_i:i\in\mathbb{N}_0\},\\
&B_i(d_1,d_2)=\{\la\ge0:\lambda_i^{-}(d_1,d_2)<\la<\lambda_i^{+}(d_1,d_2)\}.
\end{split}
\end{equation}
It follows from Lemma 5.2 of \cite{NiW}
that, if $H_i(d_1,d_2,\mu_j)\ne0$ for all $j\in\mathbb{N}_0$, then
\begin{equation}\label{inde}
\text{index }(F(d_1,d_2,\mathbf{\cdot}),\mathbf{u_i})=(-1)^{\gamma_i},
\end{equation}
where
\begin{equation}\label{gam}
\gamma_i=\begin{cases}
\sum_{\mu_j\in B_i\cap E}m(\mu_j) &B_i\cap E\ne\emptyset,\\
0 & B_i\cap E=\emptyset,
\end{cases}
\end{equation}
$m(\mu_j)$ is the multiplicity of $\mu_j$, and $F(d_1,d_2,\mathbf{\cdot})$ is defined as in Eq. \eqref{FF}. Similar to Section 3.1, the following discussion is also divided into two cases: case I and case II, which are defined as in Eqs. \eqref{ca2} and \eqref{ca1}. We first consider case II, and system \eqref{CM} may have three constant positive equilibria in this case.
\begin{theorem}
Assume that
$$\alpha>1,\;\ds\f{4\alpha}{(\alpha+1)^2}<\beta<1\text{ and }c>C(u_0),$$
where $u_0$ is defined as in Lemma \ref{l2}. Then the following two statements are true.
\begin{enumerate}
\item [(i)]
System \eqref{stCM} has three constant positive solutions $\mathbf{u_i}=(u_i,v_i)\;(i=1,2,3)$ satisfying $u_1<u_2<\ds\f{\alpha-1}{2\alpha}<u_3$.
\item[(ii)] If $\phi_1(u_1)\psi'_1(u_1)/d_1\in (\mu_p,\mu_{p+1})$ and $\phi_1(u_2)\psi'_1(u_2)/d_1\in (\mu_q,\mu_{q+1})$ for some $p\ge1$ and $q\ge1$, where $\phi_1$ and $\psi_1$ are defined as in Eq. \eqref{pspi}, and $\sum_{i=1}^p m(\mu_i)+\sum_{i=1}^q m(\mu_i)$ is odd, then there exists a positive constant $\hat d_2=\hat d_2(d_1,\alpha,\beta,d,c)$ such that system \eqref{stCM} has at least one nonconstant positive solution for any $d>\hat d_2$.
\end{enumerate}
\end{theorem}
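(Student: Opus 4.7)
Part (i) is an immediate consequence of Lemma \ref{l2}(ii) and Theorem \ref{equ}: since $c>C(u_0)$ and $C$ has a local minimum at $u_0\in(0,u_{1,*})$ while being strictly decreasing on $(u_{2,*},1)$, the equation $C(u)=c$ has exactly three roots, one in each of $(0,u_0)$, $(u_0,u_{1,*})$, and $(u_{2,*},1)$, giving $u_1<u_2<u_{1,*}<\frac{\alpha-1}{2\alpha}<u_{2,*}<u_3$.

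For part (ii) I will run a Leray-Schauder degree argument on the operator $F(d_1,d_2,\cdot)$ of \eqref{FF}. By Lemma \ref{les} there is an open ball $B\subset\mathbf{X}$ whose positive cone contains every positive solution of \eqref{stCM}, uniformly in $d_2\geq\underline d_2$ and in $d_1$ over any compact interval. Arguing by contradiction, suppose that for some arbitrarily large $d_2$ the only positive solutions are the three constants $\mathbf{u_1},\mathbf{u_2},\mathbf{u_3}$, so
\[
\deg(F(d_1,d_2,\cdot),B,\mathbf{0})=\sum_{i=1}^{3}\text{index}(F(d_1,d_2,\cdot),\mathbf{u_i}),
\]
with each index read off from \eqref{inde}--\eqref{gam}. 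Lemma \ref{sig} combined with the shape of $C$ in Lemma \ref{l2}(ii) gives $\text{Det}\,G_{\mathbf{u}}(\mathbf{u_1}),\text{Det}\,G_{\mathbf{u}}(\mathbf{u_3})>0$ and $\text{Det}\,G_{\mathbf{u}}(\mathbf{u_2})<0$, while the position of $u_i$ relative to $\frac{\alpha-1}{2\alpha}$ gives $\psi'_1(u_1),\psi'_1(u_2)>0>\psi'_1(u_3)$. Applying Vieta's formulas to $H_i(d_1,d_2,\cdot)=0$ as $d_2\to\infty$ then yields: $H_1$ has two positive roots, the larger tending to $\phi_1(u_1)\psi'_1(u_1)/d_1\in(\mu_p,\mu_{p+1})$ and the smaller to $0^+$, so $B_1\cap E=\{\mu_1,\ldots,\mu_p\}$ and $\gamma_1=\sum_{j=1}^{p}m(\mu_j)$; $H_2$ has one positive root tending to $\phi_1(u_2)\psi'_1(u_2)/d_1\in(\mu_q,\mu_{q+1})$ and one negative root tending to $0^-$, so $B_2\cap E=\{\mu_0,\mu_1,\ldots,\mu_q\}$ and $\gamma_2=1+\sum_{j=1}^{q}m(\mu_j)$; $H_3$ has two negative roots, so $B_3\cap E=\emptyset$ and $\gamma_3=0$.

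To finish, pick $(d_1^*,d_2^*)$ with both components so large that Theorem \ref{coef} forbids nonconstant positive solutions. Along the straight-line homotopy from $(d_1,d_2)$ to $(d_1^*,d_2^*)$, Lemma \ref{les} keeps all positive solutions inside $B$, so the degree is preserved. At $(d_1^*,d_2^*)$ the same Vieta analysis shows the positive roots of $H_1$ drop below $\mu_1$ (once $d_1^*>\phi_1(u_1)\psi'_1(u_1)/\mu_1$), giving $\gamma_1=\gamma_3=0$ and $\gamma_2=m(\mu_0)=1$, hence the degree equals $1+(-1)+1=1$. Equating the two degree expressions yields
\[
(-1)^{\sum_{j=1}^{p}m(\mu_j)}+(-1)^{1+\sum_{j=1}^{q}m(\mu_j)}=0,
\]
forcing $\sum_{j=1}^{p}m(\mu_j)+\sum_{j=1}^{q}m(\mu_j)$ to be even and contradicting the parity hypothesis. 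The principal obstacle is the asymptotic root bookkeeping: one must verify carefully that for $d_2$ large enough, the small root of $H_1$ is strictly in $(0,\mu_1)$ and the small root of $H_2$ is strictly in $(-\infty,0)$, so that $\mu_0=0$ lands in $B_2\cap E$ but not in $B_1\cap E$ — this parity swap between $\gamma_1$ and $\gamma_2$ is exactly what drives the contradiction.
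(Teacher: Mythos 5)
Your proof is correct and follows essentially the same route as the paper: the same sign bookkeeping via Lemma \ref{sig}, the same $d_2\to\infty$ asymptotics for the roots of $H_i$ to place $\mu_0$ in $B_2$ but not $B_1$, the same homotopy in the diffusion coefficients to a regime covered by Theorem \ref{coef}, and the same parity contradiction. Only two cosmetic imprecisions: the roots of $H_3$ may be complex rather than ``two negative roots'' (what matters is $H_3(\la)>0$ for $\la\ge0$), and at the endpoint $(d_1^*,d_2^*)$ you must also arrange that the positive root of $H_2$, not only that of $H_1$, drops below $\mu_1$.
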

\begin{proof}
Due to Lemma \ref{l2}, Theorems \ref{equ} and \ref{sig}, we see that system \eqref{stCM} has three positive constant solutions $\mathbf{u_i}=(u_i,v_i)\;(i=1,2,3)$ satisfying $u_1<u_2<\ds\f{\alpha-1}{2\alpha}<u_3$, and
 \begin{equation*}
 \begin{split}
 &\text{Det } G_{\mathbf u}(\mathbf{u_2})<0, \;\;\text{Det } G_{\mathbf u}(\mathbf{u_i})>0 \text{ for } i=1,3,\\
 &\psi'_1(u_3)<0, \;\; \psi'_1(u_i)>0 \text{ for }i=1,2.
 \end{split}
 \end{equation*}
Therefore,
\begin{equation} \label{limd}
\lim_{d_2\to\infty}\lambda_i^{+}(d_1,d_2)=\phi_1(u_i)\psi'_1(u_i)/d_1>0,\;\lim_{d_2\to\infty}\lambda_i^{-}(d_1,d_2)=0,\text{ for }i=1,2.
\end{equation}
Because $\phi_1(u_1)\psi'_1(u_1)/d_1\in (\mu_p,\mu_{p+1})$ and $\phi_1(u_2)\psi'_1(u_2)/d_1\in (\mu_q,\mu_{q+1})$,
by virtue of Eq. \eqref{limd}, we see that there exists $\hat d_2=\hat d_2(d_1,\alpha,\beta,d,c)$ such that, for all $d_2>\hat d_2$,
\begin{equation}\label{lae}
\begin{split}
&0<\lambda_1^{-}(d_1,d_2)<\mu_1,\;\;\mu_p<\lambda_1^{+}(d_1,d_2)<\mu_{p+1},\\
&\lambda_2^{-}(d_1,d_2)<0,\;\;\mu_q<\lambda_2^{+}(d_1,d_2)<\mu_{q+1}.
\end{split}
\end{equation}
It follows from Theorem \ref{coef} that there exists $d_*(\alpha,\beta,c,d,\Omega)$ such that, for all $d_1,d_2>d_*$, system \eqref{stCM} has no nonconstant positive steady states. We choose $\tilde d_1>d_*$ satisfies $$\phi_1(u_i)\psi'_1(u_i)/{\tilde d_1}<\mu_1\text{ for } i=1,2,$$
and hence we can choose $\tilde d_2>d_*$ satisfies
\begin{equation}\label{lae1}
0<\lambda_1^{-}(\tilde d_1,\tilde d_2)<\lambda_1^{+}(\tilde d_1,\tilde d_2)<\mu_1,\;\lambda_2^{-}(\tilde d_1,\tilde d_2)<0<\lambda_2^{+}(\tilde d_1,\tilde d_2)<\mu_{1}.
\end{equation}
Then we claim that system \eqref{stCM} has at least one nonconstant positive steady states for all $d_2>\hat d_2$. If this is not true, then there exists $d_2>\hat d_2$ such that system \eqref{stCM} has no nonconstant positive steady states. As in Theorem 5.7 of \cite{NiW}, we also define
\begin{equation}\label{cgu1}
D(t)=\left(\begin{array}{cc}
td_1+(1-t)\tilde d_1&0\\
0&td_2+(1-t)\tilde d_2
\end{array}\right),\;\;t\in[0,1],
\end{equation}
and
\begin{equation}\label{ph}\Phi(\mathbf{u},t)=\mathbf{u}-(I-\Delta)^{-1}\{D^{-1}(t) G(\mathbf{u})+\mathbf{u}\}=\mathbf{0}\text{ on }\mathbf{X}.
\end{equation}
Then, $$\Phi(\mathbf{u},1)=F(d_1,d_2,\mathbf{u})\;\;\text{and}\;\;\Phi(\mathbf{u},0)=F(\tilde d_1,\tilde d_2,\mathbf{u}).$$
By virtue of Eqs. \eqref{lae} and \eqref{lae1}, we have
\begin{equation}\label{inde1}
\begin{split}
&\text{index}(\Phi(\mathbf{\cdot},1),\mathbf{u_1})=\text{index}(F(d_1,d_2,\mathbf{\cdot}),\mathbf{u_1})=(-1)^{\sum_{i=1}^pm(\mu_i)},\\
&\text{index}(\Phi(\mathbf{\cdot},1),\mathbf{u_2})=\text{index}(F(d_1,d_2,\cdot),\mathbf{u_2})=(-1)^{\sum_{i=1}^qm(\mu_i)+1},\\
&\text{index}(\Phi(\mathbf{\cdot},0),\mathbf{u_1})=\text{index}(F(\tilde d_1,\tilde d_2,\mathbf{\cdot}),\mathbf{u_1})=1,\\
&\text{index}(\Phi(\mathbf{\cdot},0),\mathbf{u_2})=\text{index}(F(\tilde d_1,\tilde d_2,\mathbf{\cdot}),\mathbf{u_2})=-1.\\
\end{split}
\end{equation}
Moreover, noticing that $\text{Det } G_{\mathbf u}(\mathbf{u_3})>0$ and
$\psi'_1(u_3)<0$, we have $H_i(d_1,d_2,\la)>0$ for all $\la\ge0$ and $d_1,d_2>0$, which leads to
\begin{equation}\label{inde2}
\begin{split}
&\text{index}(\Phi(\mathbf{\cdot},1),\mathbf{u_3})=\text{index}(F(d_1,d_2,\mathbf{\cdot}),\mathbf{u_3})=1,\\
&\text{index}(\Phi(\mathbf{\cdot},0),\mathbf{u_3})=\text{index}(F(\tilde d_1,\tilde d_2,\mathbf{\cdot}),\mathbf{u_3})=1.
\end{split}
\end{equation}
It follows from Theorem \ref{les} that there exist two positive constants $\overline C$ and $\underline C$ such that,
for all $0\le t\le 1$, the positive solution $(u(x),v(x))$ of system \eqref{les} satisfies $$\ds\f{1}{2\underline C}<u(x),v(x)<2\overline C.$$
Here $\overline C$ and $\underline C$ are independent of $d_2$, and depend on $d_1$, $\tilde d_1$, $\tilde d_2$, $\hat d_2$, $\alpha$, $\beta$, $d$ and $c$.
Define $$M=\{\mathbf{u}=(u,v)\in\mathbf{X}:\ds\f{1}{2\underline C}<u(x),v(x)<2\overline C\}.$$
Then $\Phi(\mathbf{u},t)\ne\mathbf{0}$ for all $\mathbf{u}\in\partial M$ and $t\in[0,1]$, and by the Leray-Schauder degree theory, we have
$$\text{deg}(\Phi(\mathbf{\cdot},0),M,\mathbf{0})=\text{deg}(\Phi(\mathbf{\cdot},1),M,\mathbf{0}).$$
Then, taking advantage of Eqs. \eqref{inde1} and \eqref{inde2}, we obtain that
\begin{equation*}
\begin{split}
&\text{deg}(\Phi(\mathbf{\cdot},1),M,\mathbf{0})\\
=&\text{index}(\Phi(\mathbf{\cdot},1),\mathbf{u_1})+\text{index}(\Phi(\mathbf{\cdot},1),\mathbf{u_2})+\text{index}(\Phi(\mathbf{\cdot},1),\mathbf{u_3})\\
=&(-1)^{\sum_{i=1}^pm(\mu_i)}+(-1)^{\sum_{i=1}^qm(\mu_i)+1}+1=3\text{ or }-1,\\
&\text{deg}(\Phi(\mathbf{\cdot},0),M,\mathbf{0})\\
=&\text{index}(\Phi(\mathbf{\cdot},0),\mathbf{u_1})+\text{index}(\Phi(\mathbf{\cdot},0),\mathbf{u_2})+\text{index}(\Phi(\mathbf{\cdot},0),\mathbf{u_3})=1,\\
\end{split}
\end{equation*}
which is a contradiction. This completes the proof.
\end{proof}
Similarly, we can derive the following three results for case I. Here we omit the proof.
\begin{theorem}
Assume that
\begin{equation*}
\alpha>1,\;\gamma(\alpha)<\beta<\ds\f{4\alpha}{(\alpha+1)^2},\;\text{and}\;c>C(u_2),
\end{equation*}
where $\gamma(\alpha)$ and $u_2$ are defined as in Lemma \ref{l2}. Then the following two statements are true.
\begin{enumerate}
\item [(i)]
System \eqref{stCM} has a unique constant positive solution $\mathbf{u_1}=(u_1,v_1)$ satisfying $u_1<\ds\f{\alpha-1}{2\alpha}$.
\item[(ii)] If $\phi_1(u_1)\psi'_1(u_1)/d_1\in (\mu_p,\mu_{p+1})$ for some $p\ge1$, where $\phi_1$ and $\psi_1$ are defined as in Eq. \eqref{pspi}, and $\sum_{i=1}^p m(\mu_i)$ is odd, then there exists a positive constant $\hat d_2=\hat d_2(d_1,\alpha,\beta,d,c)$ such that system \eqref{stCM} has at least one nonconstant positive solution for any $d>\hat d_2$.
\end{enumerate}
\end{theorem}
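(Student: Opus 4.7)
The plan is to mirror the proof of the preceding theorem, adjusted to the situation in which \eqref{CM} has only one constant positive equilibrium.

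Part (i) is read off from Lemma \ref{l2}(iii) together with Theorem \ref{equ}. Denote the two critical points of $C(u)$ given by Lemma \ref{l2}(iii) by $\bar u_1<\bar u_2$: $C$ drops from $\infty$ to $C(\bar u_1)$ on $(0,\bar u_1)$, rises to $C(\bar u_2)$ on $(\bar u_1,\bar u_2)$, and falls to $d(1+\alpha)$ on $(\bar u_2,1)$. For $c>C(\bar u_2)$ the equation $C(u)=c$ therefore has a unique root $u_1\in(0,\bar u_1)$; since Lemma \ref{l2}(iii) gives $\bar u_1<(\alpha-1)/(3\alpha)<(\alpha-1)/(2\alpha)$, this root satisfies the stated bound, and Theorem \ref{equ} confirms uniqueness of the constant positive equilibrium $\mathbf{u_1}=(u_1,v_1)$.

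For part (ii), compute the fixed-point index of $F(d_1,d_2,\cdot)$ at $\mathbf{u_1}$. Because $u_1<\bar u_1$, Lemma \ref{sig} gives $\text{Det } G_{\mathbf u}(\mathbf{u_1})>0$; because $u_1<(\alpha-1)/(2\alpha)$, also $\psi'_1(u_1)>0$. So the two roots of $H_1(d_1,d_2,\lambda)=0$ are real for sufficiently large $d_2$ and satisfy
$$\lim_{d_2\to\infty}\lambda_1^{-}(d_1,d_2)=0,\qquad \lim_{d_2\to\infty}\lambda_1^{+}(d_1,d_2)=\ds\f{\phi_1(u_1)\psi'_1(u_1)}{d_1}\in(\mu_p,\mu_{p+1}).$$
Hence there exists $\hat d_2=\hat d_2(d_1,\alpha,\beta,d,c)$ so that for all $d_2>\hat d_2$,
$$0<\lambda_1^{-}(d_1,d_2)<\mu_1\quad\text{and}\quad\mu_p<\lambda_1^{+}(d_1,d_2)<\mu_{p+1},$$
which gives $B_1\cap E=\{\mu_1,\ldots,\mu_p\}$, and by \eqref{inde}--\eqref{gam},
$$\text{index}(F(d_1,d_2,\cdot),\mathbf{u_1})=(-1)^{\sum_{i=1}^p m(\mu_i)}=-1.$$

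The closing argument is essentially verbatim from the previous theorem. Apply Theorem \ref{coef} to get $d_*=d_*(\alpha,\beta,c,d,\Omega)$ beyond which \eqref{stCM} has no nonconstant positive steady states; pick $\tilde d_1>d_*$ so large that $\phi_1(u_1)\psi'_1(u_1)/\tilde d_1<\mu_1$, and then $\tilde d_2>d_*$ so that $0<\lambda_1^{\pm}(\tilde d_1,\tilde d_2)<\mu_1$; this forces $B_1\cap E=\emptyset$ and $\text{index}(F(\tilde d_1,\tilde d_2,\cdot),\mathbf{u_1})=1$. Suppose, contrary to the claim, that \eqref{stCM} admits only the constant solution $\mathbf{u_1}$ at some $(d_1,d_2)$ with $d_2>\hat d_2$. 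Form the homotopy $D(t),\Phi(\mathbf{u},t)$ as in \eqref{cgu1}--\eqref{ph}; by Lemma \ref{les}, applied with $\underline d_1=\min\{d_1,\tilde d_1\}$, $\overline d_1=\max\{d_1,\tilde d_1\}$, $\underline d_2=\min\{d_2,\tilde d_2\}$, there are constants $\underline C,\overline C$ independent of $t$ such that every positive solution along the homotopy lies in $M=\{\mathbf{u}\in\mathbf{X}:1/(2\underline C)<u,v<2\overline C\}$. Homotopy invariance of the Leray--Schauder degree then yields
$$1=\text{deg}(\Phi(\cdot,0),M,\mathbf{0})=\text{deg}(\Phi(\cdot,1),M,\mathbf{0})=-1,$$
a contradiction. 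The only real hurdle is notational: the names $u_1,u_2$ already serve both as critical points of $C$ in Lemma \ref{l2}(iii) and as $u$-components of constant equilibria, and one must keep the two roles separate throughout. Every analytic ingredient (a priori bounds, the index formula via $B_i\cap E$, the homotopy device, the large-diffusion nonexistence theorem) is already in place in the excerpt, so the proof is effectively a transcription of the previous theorem with ``three equilibria'' replaced by ``one.''
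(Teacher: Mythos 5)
Your proposal is correct and is precisely the adaptation the paper intends: the authors omit this proof, stating it follows "similarly" to the preceding three-equilibria theorem, and your transcription (unique root of $C(u)=c$ in $(0,\bar u_1)$ giving $\psi_1'(u_1)>0$ and $\operatorname{Det}G_{\mathbf u}(\mathbf{u_1})>0$ via Lemma \ref{sig}, the index computation $(-1)^{\sum_{i=1}^p m(\mu_i)}=-1$ for large $d_2$ versus index $1$ at large $(\tilde d_1,\tilde d_2)$, and the homotopy/degree contradiction $-1\ne 1$) is exactly that argument. Your care in renaming the critical points $\bar u_1,\bar u_2$ to avoid the clash with the equilibrium labels is a sensible clarification of the paper's notation.
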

\begin{theorem}
Assume that
\begin{equation*}
\alpha>1,\;\gamma(\alpha)<\beta<\ds\f{4\alpha}{(\alpha+1)^2},\;\text{and}\;\max\left\{C(u_1),C\left(\ds\f{\alpha-1}{2\alpha}\right)\right\}<c<C(u_2),
\end{equation*}
where $\gamma(\alpha)$, $u_1$ and $u_2$ are defined as in Lemma \ref{l2}. Then the following two statements are true.
\begin{enumerate}
\item [(i)]
System \eqref{stCM} has three positive constant solutions $\mathbf{u_i}=(u_i,v_i)\;(i=1,2,3)$ satisfying $u_1<u_2<u_3<\ds\f{\alpha-1}{2\alpha}$.
\item[(ii)] If
\begin{equation*}
\begin{split}
&\phi_1(u_1)\psi'_1(u_1)/d_1\in (\mu_p,\mu_{p+1}),\\
&\phi_1(u_2)\psi'_1(u_2)/d_1\in (\mu_q,\mu_{q+1}),\\
&\phi_1(u_3)\psi'_1(u_3)/d_1\in (\mu_r,\mu_{r+1}),
\end{split}
\end{equation*}
for some $p\ge1$, $q\ge1$ and $r\ge1$, where $\phi_1$ and $\psi_1$ are defined as in Eq. \eqref{pspi}, and $$\sum_{i=1}^p m(\mu_i)+\sum_{i=1}^q m(\mu_i)+\sum_{i=1}^r m(\mu_i)$$ is odd, then there exists a positive constant $\hat d_2=\hat d_2(d_1,\alpha,\beta,d,c)$ such that system \eqref{stCM} has at least one nonconstant positive solution for any $d>\hat d_2$.
\end{enumerate}
\end{theorem}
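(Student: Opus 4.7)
The proof follows the Leray-Schauder degree scheme of the preceding theorem, so I outline only the new ingredients. For part (i), Lemma \ref{l2}(iii) tells us that under $\gamma(\alpha) < \beta < 4\alpha/(\alpha+1)^2$, the function $C(u)$ decreases on $(0, u_1)$, increases on $(u_1, u_2)$ and decreases on $(u_2, 1)$, where the critical points $u_1 < u_2$ both lie in $(0, (\alpha-1)/(2\alpha))$. The sandwich $\max\{C(u_1), C((\alpha-1)/(2\alpha))\} < c < C(u_2)$ isolates exactly one root of $C(u) = c$ in each of $(0, u_1)$, $(u_1, u_2)$ and $(u_2, (\alpha-1)/(2\alpha))$; the inequality $c > C((\alpha-1)/(2\alpha))$ is precisely what pins the third root to the left of $(\alpha-1)/(2\alpha)$ on the descending branch past $u_2$. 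Combined with Theorem \ref{equ}, these three roots furnish the three constant positive steady states $\mathbf{u_i} = (u_i, v_i)$ satisfying $u_1 < u_2 < u_3 < (\alpha-1)/(2\alpha)$.

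For part (ii), the crucial observation is that every $u_i$ lies strictly below $(\alpha-1)/(2\alpha)$, so $\psi_1'(u) = (\alpha-1) - 2\alpha u$ gives $\psi_1'(u_i) > 0$ at each equilibrium. Reading $C'$ off the shape of $C$ and invoking Lemma \ref{sig} yields the sign pattern $\mathrm{Det}\, G_{\mathbf u}(\mathbf{u_1}) > 0$, $\mathrm{Det}\, G_{\mathbf u}(\mathbf{u_2}) < 0$, $\mathrm{Det}\, G_{\mathbf u}(\mathbf{u_3}) > 0$. The principal difference with the preceding theorem is that $\mathbf{u_3}$ no longer sits beyond $(\alpha-1)/(2\alpha)$, so $\psi_1'(u_3) > 0$ and its index will contribute nontrivially to the degree count.

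Fix $d_1$ and choose $\hat d_2$ so large that, for every $d_2 > \hat d_2$, the roots $\lambda_i^{\pm}$ of $H_i(d_1, d_2, \lambda)$ satisfy $\lambda_i^+$ arbitrarily close to $\phi_1(u_i)\psi_1'(u_i)/d_1$ and, separately, $\lambda_i^- > 0$ small when $\mathrm{Det}\, G_{\mathbf u}(\mathbf{u_i}) > 0$ and $\lambda_i^- < 0$ when $\mathrm{Det}\, G_{\mathbf u}(\mathbf{u_i}) < 0$. The hypotheses $\phi_1(u_i)\psi_1'(u_i)/d_1 \in (\mu_{s_i}, \mu_{s_i+1})$ with $s_1 = p$, $s_2 = q$, $s_3 = r$ then place
\[
B_1 \cap E = \{\mu_1, \dots, \mu_p\}, \quad B_2 \cap E = \{\mu_0, \mu_1, \dots, \mu_q\}, \quad B_3 \cap E = \{\mu_1, \dots, \mu_r\},
\]
so by \eqref{inde}--\eqref{gam} the three indices are $(-1)^{\sum_{i=1}^p m(\mu_i)}$, $(-1)^{1 + \sum_{i=1}^q m(\mu_i)}$ and $(-1)^{\sum_{i=1}^r m(\mu_i)}$.

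Arguing by contradiction, suppose system \eqref{stCM} has no nonconstant positive steady state at some $d_2 > \hat d_2$. Choose $\tilde d_1, \tilde d_2 > d_*$ (with $d_*$ from Theorem \ref{coef}) large enough that $\phi_1(u_i)\psi_1'(u_i)/\tilde d_1 < \mu_1$ for every $i$; then $B_1$ and $B_3$ miss $E$ entirely while $B_2 \cap E = \{\mu_0\}$, producing indices $1, -1, 1$. Using Lemma \ref{les} to bound positive solutions uniformly in $t \in [0, 1]$ along the homotopy $\Phi$ of \eqref{cgu1}--\eqref{ph}, invariance of the Leray-Schauder degree forces
\[
(-1)^{\sum_{i=1}^p m(\mu_i)} - (-1)^{\sum_{i=1}^q m(\mu_i)} + (-1)^{\sum_{i=1}^r m(\mu_i)} = 1.
\]
An elementary case check on parities shows that whenever $\sum_{i=1}^p m(\mu_i) + \sum_{i=1}^q m(\mu_i) + \sum_{i=1}^r m(\mu_i)$ is odd, the left-hand side lies in $\{-3, -1, 3\}$, a contradiction. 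The principal obstacle is keeping the index bookkeeping consistent across three equilibria; the only genuinely new calculation, compared with the preceding theorem, is the nontrivial contribution from $\mathbf{u_3}$, which is exactly what drives the three-term parity hypothesis in the statement.
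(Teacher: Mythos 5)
Your proposal is correct and follows exactly the route the paper intends: the paper omits this proof as ``similar'' to the preceding theorem, and your argument is precisely that adaptation, with the one genuinely new point (all three equilibria lie below $\frac{\alpha-1}{2\alpha}$, so $\psi_1'(u_3)>0$ and $\mathbf{u_3}$ contributes $(-1)^{\sum_{i=1}^r m(\mu_i)}$ rather than $+1$) correctly identified and the sign pattern $\mathrm{Det}\,G_{\mathbf u}(\mathbf{u_1})>0$, $\mathrm{Det}\,G_{\mathbf u}(\mathbf{u_2})<0$, $\mathrm{Det}\,G_{\mathbf u}(\mathbf{u_3})>0$ correctly read off Lemma \ref{sig}. (A negligible imprecision: since the product of the three index terms equals $-(-1)^{P+Q+R}=+1$ under the parity hypothesis, the degree at $t=1$ is in fact $3$ or $-1$, never $-3$; either way it differs from $1$, so the contradiction stands.)
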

\begin{theorem}
Assume that
\begin{equation*}
\alpha>1,\; \beta <\gamma(\alpha) ,\;\text{and}\;c>C\left(\ds\f{\alpha-1}{2\alpha}\right),
\end{equation*}
where $\gamma(\alpha)$ are defined as in Lemma \ref{l2}. Then the following two statements are true.
\begin{enumerate}
\item [(i)]
System \eqref{stCM} has a unique constant solution $\mathbf{u_1}=(u_1,v_1)$ satisfying $u_1<\ds\f{\alpha-1}{2\alpha}$.
\item[(ii)] If $\phi_1(u_1)\psi'_1(u_1)/d_1\in (\mu_p,\mu_{p+1})$ for some $p\ge1$, where $\phi_1$ and $\psi_1$ are defined as in Eq. \eqref{pspi}, and $\sum_{i=1}^p m(\mu_i)$ is odd, then there exists a positive constant $\hat d_2=\hat d_2(d_1,\alpha,\beta,d,c)$ such that system \eqref{stCM} has at least one nonconstant positive solution for any $d>\hat d_2$.
\end{enumerate}
\end{theorem}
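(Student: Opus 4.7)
Part $(i)$ is immediate from Lemma \ref{l2}$(iv)$ and Theorem \ref{equ}: under $\alpha>1$ and $0<\beta<\gamma(\alpha)$, the function $C(u)$ is strictly decreasing on $(0,1)$ with $\lim_{u\to 0^+}C(u)=\infty$ and $\lim_{u\to 1^-}C(u)=d(1+\alpha)$. Hence for $c>C\bigl(\frac{\alpha-1}{2\alpha}\bigr)$ the equation $C(u)=c$ has a unique root $u_1$, and strict monotonicity of $C$ forces $u_1<\frac{\alpha-1}{2\alpha}$; the corresponding $v_1$ is then recovered from Eq. \eqref{uvs}.

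For Part $(ii)$, my plan is to repeat the Leray-Schauder degree argument used in the three-equilibrium theorem, but with only $\mathbf{u_1}$ to track. First I verify the sign information at $\mathbf{u_1}$: $C'(u_1)<0$ by Lemma \ref{l2}$(iv)$, so Lemma \ref{sig} gives $\text{Det }G_{\mathbf u}(\mathbf{u_1})>0$, and $u_1<\frac{\alpha-1}{2\alpha}$ lies strictly to the left of the unique critical point of $\psi_1$, so $\psi'_1(u_1)>0$. Consequently,
\begin{equation*}
\lim_{d_2\to\infty}\lambda_1^{+}(d_1,d_2)=\phi_1(u_1)\psi'_1(u_1)/d_1>0,\qquad \lim_{d_2\to\infty}\lambda_1^{-}(d_1,d_2)=0,
\end{equation*}
so, invoking the assumption $\phi_1(u_1)\psi'_1(u_1)/d_1\in(\mu_p,\mu_{p+1})$, I can pick $\hat d_2$ large enough that for every $d_2>\hat d_2$ one has $0<\lambda_1^{-}(d_1,d_2)<\mu_1$ and $\mu_p<\lambda_1^{+}(d_1,d_2)<\mu_{p+1}$. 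Then $B_1(d_1,d_2)\cap E=\{\mu_1,\dots,\mu_p\}$, and formulas \eqref{inde}--\eqref{gam} yield $\text{index}(F(d_1,d_2,\cdot),\mathbf{u_1})=(-1)^{\sum_{i=1}^p m(\mu_i)}=-1$.

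Next, by Theorem \ref{coef} I pick $\tilde d_1,\tilde d_2>d_*$ also satisfying $\phi_1(u_1)\psi'_1(u_1)/\tilde d_1<\mu_1$; this forces $\lambda_1^{\pm}(\tilde d_1,\tilde d_2)<\mu_1$, so $B_1(\tilde d_1,\tilde d_2)\cap E=\emptyset$ and $\text{index}(F(\tilde d_1,\tilde d_2,\cdot),\mathbf{u_1})=1$, while at the same time system \eqref{stCM} has no nonconstant positive steady state at $(\tilde d_1,\tilde d_2)$. Assuming for contradiction that some $d_2>\hat d_2$ yields no nonconstant positive steady state either, Lemma \ref{les} supplies a bounded open set $M\subset\mathbf X$ containing every positive solution along the linear homotopy \eqref{cgu1}--\eqref{ph}, uniformly in $t\in[0,1]$. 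Homotopy invariance of the Leray-Schauder degree then gives
\begin{equation*}
\mathrm{deg}(\Phi(\cdot,0),M,\mathbf{0})=\mathrm{deg}(\Phi(\cdot,1),M,\mathbf{0}),
\end{equation*}
i.e., $1=-1$, the desired contradiction.

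The main obstacle is confirming that the a priori bounds of Lemma \ref{les} are genuinely uniform along the homotopy: the effective pair $(td_1+(1-t)\tilde d_1,\,td_2+(1-t)\tilde d_2)$ stays in the rectangle $[\min(d_1,\tilde d_1),\max(d_1,\tilde d_1)]\times[\min(d_2,\tilde d_2),\infty)$, which is exactly the hypothesis of Lemma \ref{les}, so $\Phi(\mathbf u,t)\ne\mathbf 0$ on $\partial M$ for all $t\in[0,1]$ and the degree is well-defined. Beyond this bookkeeping, the proof is a clean specialization of the three-equilibrium case, with the parity calculation reducing to the single sum $\sum_{i=1}^p m(\mu_i)$ appearing in the hypothesis.
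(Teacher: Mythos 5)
Your argument is correct and is exactly the specialization of the paper's proof of the three-equilibrium case to the single equilibrium $\mathbf{u_1}$, which is precisely what the paper intends when it omits the proof with the remark that the case I results are derived ``similarly.'' The sign facts ($\psi_1'(u_1)>0$ from $u_1<\frac{\alpha-1}{2\alpha}$, $\operatorname{Det}G_{\mathbf u}(\mathbf{u_1})>0$ from $C'(u_1)<0$ via Lemma \ref{sig}), the index computations, the choice of $(\tilde d_1,\tilde d_2)$ via Theorem \ref{coef}, and the uniform a priori bounds along the homotopy from Lemma \ref{les} all match the paper's template.
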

\section{Conclusions}
This paper mainly deals with the effects of the predator inference $\beta$ and conversion rate of the predator $c$ on a diffusive predator-prey model. we see that if the conversion rate is small, or the predator inference is strong, then the dynamics of system \eqref{CM} is simple, and all the solutions, regardless of the initial dates, converge to a constant steady state as time goes to infinity.
However, if the predator inference is neither strong nor weak, then system \eqref{CM} may have multiple constant positive equilibria and hence the dynamics is complex. We also find that there exist no nonconstant positive steady states when the conversion rate of the predator is large, and nonconstant positive steady states emerge when the diffusion rate of the predator is large.

We also remark that the results and methods used here cannot only be applied to CM functional response, but also for other functional responses with predator interference. For example, we can similarly obtain that, the following predator-prey model with BD functional response,
\begin{equation}\label{CM1s}
\begin{cases}
  \ds\frac{\partial u}{\partial t}-d_1\Delta u=ru\left(1-\ds\f{u}{k}\right)-\ds\frac{buv}{1+\alpha u+\beta v}, & x\in \Omega,\; t>0,\\
 \ds\frac{\partial v}{\partial t}-d_2\Delta v=-dv+\ds\frac{cuv}{1+\alpha u+\beta v}, & x\in\Omega,\; t>0,\\
 \partial_\nu  u=\partial_\nu
  v=0,& x\in \partial \Omega,\;
 t>0,\\
\end{cases}
\end{equation}
has no nonconstant positive steady states when the conversion rate of the predator is large.

\end{document}